\documentclass[12pt]{amsart}

 \usepackage{amsfonts,graphics,amsmath,amsthm,amsfonts,amscd,amssymb,amsmath,latexsym,multicol,
 mathrsfs}
\usepackage{epsfig,url}
\usepackage{flafter}
\usepackage{fancyhdr}
\usepackage{hyperref}
\hypersetup{colorlinks=true, linkcolor=black}

\addtolength{\oddsidemargin}{-0.3in}
\addtolength{\evensidemargin}{-0.3in}
\addtolength{\textwidth}{0.6in}

\addtolength{\topmargin}{-0.4in}
\addtolength{\textheight}{0.7in}


\makeatletter

\def\jobis#1{FF\fi
  \def\predicate{#1}%
  \edef\predicate{\expandafter\strip@prefix\meaning\predicate}%
  \edef\job{\jobname}%
  \ifx\job\predicate
}

\makeatother

\if\jobis{proposal}%
\else
\fi

 \usepackage[matrix, arrow]{xy}

\DeclareMathOperator{\Supp}{Supp}

\DeclareMathOperator{\vol}{vol}


 \numberwithin{equation}{subsection}
 \numberwithin{footnote}{subsection}

 \newtheorem{cor}[subsection]{Corollary}
 \newtheorem{lem}[subsection]{Lemma}
 \newtheorem{prop}[subsection]{Proposition}
 \newtheorem{thm}[subsection]{Theorem}
 \newtheorem{conj}[subsection]{Conjecture}

{
    \newtheoremstyle{upright}%
        {8pt plus2pt minus4pt}%
        {8pt plus2pt minus4pt}%
        {\upshape}%
        {}%
        {\bfseries\scshape}%
        {}%
        {1em}%
        {}%
\theoremstyle{upright}

 \newtheorem{rem}[subsection]{Remark}

}

\newcommand{\xHom}{\mathscr{H}om}
 \newcommand{\x}{\mathscr}

 \newcommand{\PP}{\mathbb P}
 
 \newcommand{\Q}{\mathbb Q}
 \newcommand{\R}{\mathbb R}
 
 \newcommand{\bir}{\dashrightarrow}
 \newcommand{\rddown}[1]{\left\lfloor{#1}\right\rfloor} 


\title{Singularities on the base of a Fano type fibration}
\thanks{2010 MSC: 14E30, 14J17}
\author{Caucher Birkar}
\date{\today}
\begin{document}
\maketitle

\begin{abstract}
Let $f\colon X\to Z$ be a Mori fibre space. M$^{\rm c}$Kernan conjectured that the 
singularities of $Z$ are bounded in terms of the singularities of $X$. Shokurov generalised this 
to pairs: let $(X,B)$ be a klt pair and $f\colon X\to Z$ a contraction such that $K_X+B\sim_\R 0/Z$ and 
that the general fibres of $f$ are Fano type varieties; adjunction for fibre spaces produces a 
discriminant divisor $B_Z$ and a moduli divisor $M_Z$ on $Z$.   
it is then conjectured that the singularities of $(Z,B_Z+M_Z)$ are bounded in terms of the 
singularities of $(X,B)$. We prove Shokurov conjecture when $(F,\Supp B_F)$ belongs 
to a bounded family where $F$ is a general fibre of $f$ and $K_F+B_F=(K_X+B)|_F$. 
\end{abstract}



\section{Introduction}

We work over an algebraically closed field $k$ of characteristic zero. Let $X$ be a variety with klt singularities and 
$f\colon X\to Z$ a $K_X$-negative extremal contraction. When $f$ is a divisorial contraction, we can check that 
the singularities of $Z$ are as good as the singularities of $X$. More precisely, in terms of 
log discrepancies we have 
$$
a(E,X,0)\le a(E,Z,0)
$$ 
for every prime divisor $E$ on birational models of $X,Z$.  If $f$ is a flipping contraction, and if $f^+\colon X^+\to Z$ 
is the positive side of the flip, then singularities of $X^+$ are as good as the singularities of $X$.
This is important since 
if we want to prove a statement about $X$ we can often translate it into a similar statement about $Z$ or $X^+$ (eg, finite generation). 

If $f$ is not birational, one would still like to understand the singularities on 
$Z$ although this is much more complicated. Beside being an interesting problem on its own, it is also important for inductive arguments.
M$^{\rm c}$Kernan conjectured that the singularities of $Z$ are bounded in terms of the singularities of $X$, that is: 

\begin{conj}[$M_{d,\epsilon}$]\label{c-mc}
Let $d$ be a natural number and $\epsilon>0$ a real number.
Then, there is a real number $\delta>0$ depending on $d,\epsilon$ satisfying the following: 
let $f\colon X\to Z$ be a $K_X$-negative extremal contraction such that 

$\bullet$ $X$ is $\epsilon$-lc of dimension $d$ and $\Q$-factorial, and 

$\bullet$ $\dim X>\dim Z$.\\\\
Then $Z$ is $\delta$-lc.
\end{conj}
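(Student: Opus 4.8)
The plan is to deduce $M_{d,\epsilon}$ from the Main Theorem quoted in the abstract (Shokurov's conjecture under boundedness of $(F,\Supp B_F)$). Although the extremal contraction $f$ carries no boundary, we can manufacture one: choose $B\ge 0$ on $X$ with $(X,B)$ klt and $K_X+B\sim_\R 0/Z$, so that $f\colon(X,B)\to Z$ becomes a Fano type fibration in Shokurov's sense. Adjunction for fibre spaces then produces $B_Z$ and $M_Z$ with $K_X+B\sim_\R f^*(K_Z+B_Z+M_Z)$, and a bound on the singularities of $(Z,B_Z+M_Z)$ will bound those of $Z$ as well: since $B_Z\ge 0$ and, for fibres in a bounded family, $M_Z$ may be chosen effective, we get $a(E,Z,0)\ge a(E,Z,B_Z+M_Z)$ for every prime divisor $E$ over $Z$, so $(Z,B_Z+M_Z)$ being $\delta$-lc forces $Z$ to be $\delta$-lc.

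First I would check that a general fibre $F$ of $f$ lies in a bounded family. Because $-K_X$ is ample over $Z$ and $X$ is $\epsilon$-lc of dimension $d$, a general fibre $F$ is an $\epsilon$-lc Fano variety of dimension $d-\dim Z$ (with $1\le d-\dim Z\le d$); here $K_F=(K_X)|_F$, and being general $F$ inherits the $\epsilon$-lc property. By the BAB theorem, such $F$ form a bounded family $\mathcal P$ depending only on $d$ and $\epsilon$.

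Next I would build the boundary, crucially in a way that is uniform in $f$. Using boundedness of $\mathcal P$ together with effective birationality of anti-log-canonical systems and boundedness of complements on $\mathcal P$, there is $m\in\N$ depending only on $d,\epsilon$ such that a general $B_F\in\frac{1}{m}\lvert -mK_F\rvert$ on a general fibre $F$ satisfies: $(F,B_F)$ is klt with controlled singularities, $K_F+B_F\sim_\Q 0$, and $(F,\Supp B_F)$ belongs to a bounded family $\mathcal Q$ depending only on $d,\epsilon$. Spreading this out over $Z$ — and, when later testing a divisor over a given point $z\in Z$, taking the anticanonical section general over a neighbourhood of $z$ — produces $B\ge 0$ with $(X,B)$ klt and $K_X+B\sim_\R 0/Z$ (using $\rho(X/Z)=1$ to pass from numerical to $\R$-linear triviality over $Z$), whose restriction to a general fibre is the pair $(F,B_F)$ above. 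Applying the Main Theorem to $f\colon(X,B)\to Z$ then gives $\delta>0$, depending only on $d$, $\epsilon$ and $\mathcal Q$ and hence only on $d$ and $\epsilon$, with $(Z,B_Z+M_Z)$ being $\delta$-lc; the descent of log discrepancies from the first paragraph finishes the proof.

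The hard part will be the uniformity in the boundary-building step: producing $m$, and with it the family $\mathcal Q$, independently of the particular fibration $f$. Without this one only obtains a $\delta$ depending on $f$, which is useless. This is precisely where one must invoke the effective birational geometry of Fano varieties — effective anti-log-canonical birationality, equivalently boundedness of $n$-complements for $\epsilon$-lc Fano type fibrations. A secondary technical nuisance is passing from information over the generic point of $Z$ to global control of $(X,B)$ and effectivity of the moduli part $M_Z$; the former is handled by the ``general over a neighbourhood'' choice of the anticanonical section, and the latter by boundedness of the moduli part for fibres in the bounded family $\mathcal Q$.
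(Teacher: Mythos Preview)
The statement you are attempting to prove is labelled \texttt{conj} in the paper: it is \emph{not} proved there. M$^{\rm c}$Kernan's conjecture $M_{d,\epsilon}$ is stated as an open problem, and the paper neither claims nor supplies a proof of it; so there is no ``paper's own proof'' to compare your proposal against. What the paper does prove is Theorem~\ref{t-main} (Shokurov's conjecture $S_{d,\epsilon,\mathcal P}$ for bounded $\mathcal P$) together with Corollaries~\ref{c-2}, \ref{c-main}, \ref{c-dim2}, none of which yields $M_{d,\epsilon}$ in arbitrary dimension.

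Your strategy is in fact exactly the one the paper sketches in Remark~\ref{rem-not-bnd} as the natural route forward: replace $(X,0)$ by a pair $(X,\Delta)$ with $(X,\Delta)$ $\epsilon'$-lc, $K_X+\Delta\sim_{\Q}0/Z$, and coefficients of $\Delta$ in a fixed finite set, then apply Corollary~\ref{c-main}. But the paper is explicit that this reduction hinges on the existence of bounded $\epsilon'$-lc complements, which at the time was known only for surfaces (see the proof of Corollary~\ref{c-dim2} and reference [\ref{B-complements}]). Your proposal imports two ingredients that substantially postdate this paper: BAB (boundedness of $\epsilon$-lc Fanos) and boundedness of complements for Fano type fibrations; both are later theorems of Birkar and are far deeper than anything in the present paper. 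So while your outline is the ``right'' modern argument, it is not a proof within the framework of this paper, and the paper itself regards the needed complement result as the open step.

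One further technical point: even granting those later results, your boundary-building step is more delicate than stated. Standard boundedness of complements produces an lc $n$-complement, not an $\epsilon'$-lc one; you need the latter to feed into $S_{d,\epsilon',\mathcal Q}$. The passage from ``$F$ bounded'' and ``general $B_F\in\frac{1}{m}|-mK_F|$'' to a relative divisor $B$ on $X$ also requires $mK_X$ to be Cartier with $m$ bounded, which is not automatic from $X$ being $\epsilon$-lc and $\Q$-factorial; one must instead invoke the relative complement theorem directly over $Z$. These are precisely the difficulties the paper flags and leaves open.
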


See \ref{s-pairs} for the definition of $\epsilon$-lc singularities.
When $d=1$ or $d=2$, the conjecture is trivial since $Z$ would be a smooth curve or just a point. 
Mori and Prokhorov [\ref{MP2}, Theorem 1.2.7] proved the conjecture for $d=3$ and $\epsilon=1$ but 
with $X$ having terminal singularities: 
in this case one can take $\delta=1$. Much more recently, Alexeev and Borisov 
[\ref{AB}] proved the conjecture for toric morphisms of toric varieties.

Shokurov generalised the conjecture to the setting of pairs. 
Let $f\colon X\to Z$ be a contraction of normal varieties, and
$(X,B)$ klt such that $K_X+B\sim_\R 0/Z$. By a construction of Kawamata [\ref{ka97}][\ref{ka98}] we may write
$$
K_X+B\sim_\R f^*(K_Z+B_Z+M_Z)
$$
where $B_Z$ is called the \emph{discriminant part} and 
$M_Z$ is called the \emph{moduli part}. The discriminant part is canonically determined as a Weil $\R$-divisor by the 
singularities of $(X,B)$ and the fibres over codimension one points of $Z$; the moduli part is then 
automatically determined as an $\R$-linear equivalence class, in particular, $M_Z$ 
may be represented by many different Weil $\R$-divisors. See \ref{s-adjunction} for more details. 

We are ready to state a refined version of Shokurov's conjecture.

\begin{conj}[$S_{d,\epsilon, \mathcal{P}}$]
Let $d$ be a natural number, $\epsilon>0$ a  real number, and $\mathcal{P}$ a set of couples.
Then, there is a real number $\delta>0$ depending on $d,\epsilon, \mathcal{P}$ satisfying the following: 
let $(X,B)$ be a pair and $f\colon X\to Z$ a contraction such that 

$\bullet$ $(X,B)$ is $\epsilon$-lc of dimension $d$, 

$\bullet$ $K_X+B\sim_\R 0/Z$,

$\bullet$ the general fibres $F$ of $f$ are of Fano type, 

$\bullet$  $(F,\Supp B_F)$ is isomorphic in codimension one with some $(F',D_{F'})\in\mathcal{P}$ where 
$K_F+B_F=(K_X+B)|_F$.\\\\
Then,  we can choose an $\R$-divisor $M_Z\ge 0$ representing the moduli part so that  $(Z,B_Z+M_Z)$ is $\delta$-lc.
\end{conj}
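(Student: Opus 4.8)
\emph{Proof proposal.}
The plan is to phrase everything in terms of the discriminant and moduli $b$-divisors $\mathbf B,\mathbf M$ attached to $f$ by the canonical bundle formula, so that $K_X+B\sim_\R f^*(K_Z+\mathbf B_Z+\mathbf M_Z)$, and to view $(Z,\mathbf B_Z+\mathbf M_Z)$ as a generalized pair. By adjunction for fibre spaces (which uses only that $(X,B)$ is log canonical with $K_X+B\sim_\R 0/Z$) this generalized pair is already generalized log canonical, so the real content is to promote ``generalized lc'' to ``$\delta$-lc'' and, simultaneously, to pick an \emph{effective} representative $M_Z\ge 0$ of $\mathbf M_Z$ for which $(Z,B_Z+M_Z)$ is $\delta$-lc as an ordinary pair. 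Since the log discrepancy $a(E,Z,B_Z+M_Z)$ of an arbitrary divisor $E$ over $Z$ can be read off after base change — replace $Z$ by a birational model carrying $E$, pull back $(X,B)\to Z$, and run a relative MMP to restore $\Q$-factoriality and $K+B\sim_\R 0$ over the new base, none of which changes the general fibre, hence keeps us inside $\mathcal P$ up to isomorphism in codimension one — it is enough to bound $1-\mathrm{mult}_E(B_Z+M_Z)$ uniformly in $d,\epsilon,\mathcal P$ when $E$ is a prime divisor on $Z$ and $M_Z$ is a well-chosen effective representative.

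\emph{Step 1: coefficients and the discriminant.} Restricting to a general fibre gives an $\epsilon$-lc log Calabi--Yau pair $(F,B_F)$, $K_F+B_F\sim_\R 0$, with $F$ of Fano type and $(F,\Supp B_F)$ isomorphic in codimension one to a member of the bounded family $\mathcal P$. Inside a fixed bounded family the ACC for log canonical thresholds and the global ACC of Hacon--M$^{\rm c}$Kernan--Xu confine the coefficients of $B_F$, hence the horizontal coefficients of $B$, to a finite set; a short further argument does the same for the vertical coefficients, so all coefficients of $B$ lie in a finite set $\mathcal R=\mathcal R(d,\epsilon,\mathcal P)$. For the discriminant: if $E\subset Z$ is prime and $t_E=\sup\{t:(X,B+tf^*E)\text{ is lc over }\eta_E\}$, then $\mathrm{mult}_E B_Z=1-t_E$ and a log resolution $g\colon Y\to X$ gives $t_E=(1-b_S)/a_S$ for some divisor $S$ with $b_S=1-a(S,X,B)\le 1-\epsilon$ and $a_S=\mathrm{mult}_S g^*f^*E$ a positive integer; bounding $a_S$ — equivalently, bounding the multiplicities of the components of the fibre of $f$ over $\eta_E$ along the divisors computing the threshold — yields $\mathrm{mult}_E B_Z\le 1-\tau$ for a uniform $\tau>0$. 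This control of the vertical behaviour over codimension one points is the first place needing genuine work: it is not a formal consequence of boundedness of the \emph{general} fibre and must be extracted, after suitable modifications, from the structure of $f$ (Fano type fibres, $X$ klt).

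\emph{Step 2: the moduli part --- the main obstacle.} The decisive use of the boundedness of $\mathcal P$ is to gain effective control of $\mathbf M$. The pairs $(F,B_F)$ occurring as general fibres form a bounded family of $\epsilon$-lc log Calabi--Yau pairs, so their moduli, governed by the variation of Hodge structure, is bounded, and one expects $\mathbf M$ to be $b$-semiample with a \emph{uniform} $m=m(d,\epsilon,\mathcal P)\in\N$ such that $m\mathbf M$ is $b$-Cartier and $b$-free, descending to a model $\overline Z\to Z$ on which $m\mathbf M_{\overline Z}$ is base point free and is pulled back from an ample divisor on a fixed bounded base. Establishing this effective adjunction for the family at hand — passing to a simultaneous semistable/log resolution over a parameter space of $\mathcal P$, identifying the moduli part with the Hodge-theoretic one there, then spreading out and descending — is the technical heart of the argument and the step I expect to be the principal difficulty. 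Granting it, take $M_Z$ to be the birational push-forward to $Z$ of $\tfrac1m G$ for a general $G\in|m\mathbf M_{\overline Z}|$: this is an effective representative of $\mathbf M_Z$ with all coefficients equal to $\tfrac1m$ and with moving components, so it shares no component with any fixed prime divisor over $Z$.

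\emph{Step 3: assembling.} With $M_Z$ as above, for a prime divisor $E$ on $Z$ obtained after the reductions of the first paragraph we have $\mathrm{mult}_E M_Z=0$, so $a(E,Z,B_Z+M_Z)=1-\mathrm{mult}_E B_Z\ge\tau$ by Step~1; this already handles the divisors that descend to the base. For the remaining divisors $E$ over $Z$ — those whose centre lies in the support of $M_Z$ on some model, and, uniformly, all divisors appearing on the base-changed fibrations — one pulls $E$ back through $f$: since $f^*M_Z$ is a general member of a semiample system on $X$, the pair $(X,B+f^*M_Z)$ stays $\epsilon'$-lc for a uniform $\epsilon'$, and there is a prime divisor $E_X$ over $X$ dominating $\cent_Z E$ with
$$
a(E,Z,B_Z+M_Z)\ \ge\ \frac{a(E_X,X,B)}{q}\ \ge\ \frac{\epsilon}{q},
$$
where $q$ is the ramification index, equivalently the multiplicity of a component of the fibre over $\cent_Z E$ along $E_X$; the codimension one control of Step~1, applied after base change, bounds $q$ by a constant $q_0=q_0(d,\epsilon,\mathcal P)$. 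Taking $\delta=\min\{\tau,\epsilon/q_0\}$ then gives the theorem. In summary, Steps~1 and~3 are adjunction, ACC for thresholds and bounded base change, whereas the genuinely hard input — and the reason the conjecture is presently known only under a boundedness hypothesis on $\mathcal P$ — is the effective $b$-semiampleness of the moduli part in Step~2.
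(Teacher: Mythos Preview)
Your proposal has the architecture inverted and rests Step~2 on an open conjecture.

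\textbf{Where you go wrong on the moduli part.} You propose to handle $\mathbf M$ by effective $b$-semiampleness: a uniform $m=m(d,\epsilon,\mathcal P)$ with $m\mathbf M$ $b$-free. This is the Prokhorov--Shokurov conjecture and is not known, even under boundedness of $\mathcal P$; your sketch (``simultaneous semistable/log resolution over a parameter space of $\mathcal P$, identifying the moduli part with the Hodge-theoretic one, spreading out'') does not produce it. The paper does \emph{not} need this. It uses only Ambro's result that on a high enough resolution $Z'$ the moduli divisor $M_{Z'}$ is the pullback of a nef and big $\Q$-divisor from some $T$. From nef-and-big one writes $M_{Z'}\sim_\Q A_{Z'}+L_{Z'}$ with $A_{Z'}$ semiample and $L_{Z'}\ge 0$, and then rescales so that the coefficients of $L_{Z'}$ are as small as one likes. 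The point is that this suffices \emph{only because} one has already bounded the coefficients of $B_{Z'}$ (the discriminant $b$-divisor, not just $B_Z$) by $1-\delta'$ for a fixed $\delta'$; then $B_{Z'}+L_{Z'}$ has coefficients $\le 1-\delta$ for a slightly smaller $\delta$, and a general choice of $A_{Z'}$ finishes. So the moduli part is the \emph{easy} step, not the hard one.

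\textbf{Where the real work is, and what your Step~1 is missing.} You correctly isolate the problem of bounding the fibre multiplicities $a_S$, but you give no mechanism; the ACC/global-ACC remarks about coefficients of $B_F$ do not touch vertical multiplicities. The paper's key idea, which your proposal does not contain, is a volume sandwich after reduction to $\dim Z=1$: replace $B$ by an auxiliary boundary $\Gamma$ whose horizontal coefficients lie in a fixed finite set $\{b,1\}$ and with $\lfloor\Gamma\rfloor=\Supp f^*D$; run MMP so that $K_Y+\Gamma_Y$ is semiample$/Z$. On a general fibre $H$ the volume $\vol(K_H+\Gamma_H)$ is bounded above by a number $v$ coming from the bounded family $\mathcal P$ (via boundedness of $\vol(\Supp B_F)$). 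On each component $T_i$ of $h^*D$ not contracted by the canonical map, $(K_Y+\Gamma_Y)|_{T_i}$ is big with DCC coefficients, so Hacon--M\textsuperscript{c}Kernan--Xu gives a uniform lower bound $\vol(K_{T_i}+\Gamma_{T_i})\ge\theta>0$. Comparing the two via $\vol(K_H+\Gamma_H)=\sum m_i\vol(K_{T_i}+\Gamma_{T_i})$ bounds the relevant $m_i$ by $v/\theta$; a further MMP step disposes of the $T_j$ with non-big restriction. This bounds $B_Z$, and a separate argument (extracting $E$ and refibring) upgrades this to a bound on the whole $b$-divisor $\mathcal B_Z$, which is what the Ambro-model step actually needs. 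None of this appears in your outline, and your Step~3 inequality $a(E,Z,B_Z+M_Z)\ge a(E_X,X,B)/q$ presupposes exactly the bound on $q$ that has not been established.
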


See \ref{s-pairs}, \ref{s-Fano-type}, and \ref{s-bnd-couples} for the definition of general fibres, Fano type varieties, 
couples and their boundedness. A couple is essentially a pair but with no condition on singularities 
except normality. Note that unlike in Conjecture \ref{c-mc}, $f$ is allowed to be a divisorial 
contraction, a flipping contraction, or a fibre type contraction. Also note that we are not assuming 
$X,Z$ to be projective although $f$ is projective.
Mori and Prokhorov  [\ref{MP}, Theorem 1.1] prove a result on weak del Pezzo fibrations 
in dimension $3$ which is closely related to the conjecture when 
$d=3$, $\epsilon=1$, $\dim Z=1$, and $-K_F$ is nef and big. 
 
Now we come to the main theorem of this paper.

\begin{thm}\label{t-main}
Shokurov Conjecture $S_{d,\epsilon, \mathcal{P}}$ holds if $\mathcal{P}$ is a 
bounded family of couples.
\end{thm}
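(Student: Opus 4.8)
The plan is to reduce the statement to a statement about a fixed, bounded family and then use global boundedness of that family, together with the canonical bundle formula, to control the discriminant and moduli parts simultaneously. I would proceed roughly as follows.

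\medskip

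\textbf{Step 1: Bounding the general fibre and its boundary.}
Since $\mathcal{P}$ is bounded, there is a projective morphism $\mathcal{V}\to T$ of varieties, with $T$ of finite type, and a divisor $\mathcal{D}$ on $\mathcal{V}$, such that every couple in $\mathcal{P}$ appears as a fibre. After stratifying $T$, taking resolutions, and running minimal model programs fibrewise, I would arrange a uniform model so that for the general fibre $F$ of $f$, the pair $(F,B_F)$ (or a crepant birational modification of it) is $\epsilon$-lc, of Fano type, with $K_F+B_F\sim_\R 0$, and with $(F,\Supp B_F)$ fitting into this bounded family. The key point extracted here is that the coefficients of $B_F$, hence of $B$ over the generic point of $Z$, lie in a \emph{finite} set, and that $-K_F$ (or an appropriate multiple) is bounded; this uses boundedness of Fano type varieties in the relevant generality (effective base point freeness / Kawamata--Shokurov type statements) applied uniformly over the base.

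\medskip

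\textbf{Step 2: Controlling the moduli part.}
Using the bounded family from Step 1, I would show that the moduli part $M_Z$ is ``bounded'' in a precise sense: there is a fixed positive integer $m$, depending only on $d,\epsilon,\mathcal{P}$, such that $mM_Z$ is integral and nef (the b-semiampleness/boundedness of the moduli b-divisor in this bounded situation), and moreover that $M_Z$ is the pullback of a nef divisor from the moduli stack/space of the bounded family. The construction of Kawamata, combined with boundedness, gives a classifying morphism from $Z$ (or a resolution) to a fixed base parametrising the fibres, and $M_Z$ becomes the pullback of a fixed ample (or nef) class. In particular one can choose $M_Z\ge 0$ with bounded ``complexity,'' which is what the statement asks for.

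\medskip

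\textbf{Step 3: Bounding singularities of $(Z,B_Z+M_Z)$.}
With $B$ having coefficients in a fixed finite set and $(X,B)$ being $\epsilon$-lc, the discriminant $B_Z$ automatically has coefficients bounded away from $1$ by a constant depending only on $\epsilon$ and that finite set (this is the easy direction of inversion-of-adjunction-type estimates for the discriminant). The remaining and main task is to bound the log discrepancies of $(Z,B_Z+M_Z)$ from below. Here I would pass to a log resolution and use that $K_X+B\sim_\R f^*(K_Z+B_Z+M_Z)$ together with the $\epsilon$-lc hypothesis on $(X,B)$: a divisor $E$ over $Z$ of small log discrepancy would, by taking a suitable model of $X$ dominating $Z$, force a divisor over $X$ of controlled (bounded) log discrepancy, contradicting $\epsilon$-lc-ness — provided the ``correction'' coming from $M_Z$ is bounded, which is exactly what Step 2 provides. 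Making this comparison precise requires relating discrepancies upstairs and downstairs through $f$, which is where boundedness of the fibre (Step 1) re-enters to control the relative canonical divisor $K_{X/Z}$.

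\medskip

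\textbf{Main obstacle.}
The crux is Step 2: showing that the moduli part can be chosen effective with uniformly bounded positivity/integrality. Boundedness of $\mathcal{P}$ makes the relevant moduli space finite type, but transferring this into a uniform bound on $m$ with $mM_Z$ nef Cartier (or effective with bounded coefficients), uniformly in all the data, is the technical heart. Everything else — the finiteness of coefficients of $B_F$, the estimate on $B_Z$, and the discrepancy comparison — is comparatively standard once boundedness of the fibres is in hand, but I expect the moduli-part control to absorb most of the work and to rely on the deepest inputs (semiampleness of the moduli b-divisor in bounded families, and effective statements for Fano type fibrations).
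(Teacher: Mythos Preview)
Your plan has a genuine gap at its very first step, and this error propagates through the rest of the outline.

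In Step 1 you assert that boundedness of $\mathcal{P}$ forces the coefficients of $B_F$, and hence of $B$ over the generic point of $Z$, to lie in a \emph{finite} set. This is false. The hypothesis is that the \emph{couple} $(F,\Supp B_F)$ is isomorphic in codimension one to a member of $\mathcal{P}$; by definition a couple carries only a reduced divisor, so boundedness of $\mathcal{P}$ constrains $F$ and $\Supp B_F$ but says nothing whatsoever about the actual coefficients of $B_F$, which may be arbitrary reals in $(0,1-\epsilon]$. Consequently your Step 3 claim that bounding the coefficients of $B_Z$ away from $1$ is ``the easy direction'' once the coefficients of $B$ lie in a fixed finite set is not available, and your Step 2 appeal to a classifying map to a fixed moduli space (and a uniform $m$ with $mM_Z$ Cartier) is essentially the effective $b$-semiampleness conjecture, which is open in this generality and is not what the paper uses.

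The paper's emphasis is in fact the reverse of yours: the hard part is bounding the coefficients of the discriminant $b$-divisor $\mathcal{B}_Z$, and the moduli part is then handled comparatively cheaply. For the discriminant, the paper first reduces to $\dim Z=1$ by hyperplane sections, then runs carefully chosen LMMPs to produce a model $Y/Z$ on which $K_Y+\Gamma_Y$ is semi-ample$/Z$ with $\Gamma_Y$ having coefficients in a fixed finite set $\{b,1\}$. The only use of boundedness of $\mathcal{P}$ is that it gives a uniform upper bound on $\vol(\Supp B_F)$ on the general fibre, hence on $\vol(K_H+\Gamma_H)$. Combining this upper bound with the Hacon--M\textsuperscript{c}Kernan--Xu lower bound $\vol(K_T+\Gamma_T)\ge\theta$ for components $T$ of the central fibre (coefficients of $\Gamma_T$ lie in a DCC set), an intersection-number identity bounds the multiplicities of the central fibre, which in turn bounds the lc threshold and hence the coefficient of $B_Z$. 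A further argument extends this to the $b$-divisor $\mathcal{B}_Z$. Only then does the paper treat $M_Z$, and there it uses Ambro's result that on a high enough model $M_{Z'}$ is the pullback of a nef and big $\Q$-divisor; writing this as ample plus effective and absorbing the small effective piece into the already-controlled $B_{Z'}$ gives the $\delta$-lc conclusion. No uniform Cartier index for $M_Z$ is needed or claimed.
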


To prove the theorem, we use  
a recent result of Hacon, M$^{\rm c}$Kernan, and Xu [\ref{HMX}, Theorem 1.3] on volumes of big log divisors.
In view of the theorem, it is natural to consider interesting cases of bounded $\mathcal{P}$    
(Corollaries \ref{c-2} and \ref{c-main}) 
and to try to reduce the conjecture to the theorem when $\mathcal{P}$ is not bounded 
(proof of Corollary \ref{c-dim2} and Remark \ref{rem-not-bnd}).  

Let $d$ be a natural number and $\epsilon, \lambda>0$ be real numbers. Consider the pairs $(F,B_F)$ satisfying:

$\bullet$ $(F,B_F)$ is $\epsilon$-lc and of dimension $\le d-1$, 

$\bullet$ $K_F+B_F\sim_\R 0$, 

$\bullet$ $F$ is of Fano-type,

$\bullet$ each non-zero coefficient of $B_F$ is $\ge \lambda$.

{\flushleft Let} $\mathcal{R}$ be the set of the couples $(F,\Supp B_F)$. 
It is expected that  $\mathcal{R}$ is a bounded family.
This boundedness is known when $d\le 3$ (see Theorem \ref{t-bnd-surfaces}). 
Actually, if one tries to prove the  boundedness in any dimension, then Theorem \ref{t-main} 
appears naturally (see Remark \ref{rem-bnd-R} for a discussion on this).

\begin{cor}\label{c-2}
Conjecture $S_{d,\epsilon, \mathcal{R}}$ holds for the above data $d,\epsilon,\mathcal{R}$ when $d\le 3$.
\end{cor}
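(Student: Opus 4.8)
The plan is to obtain Corollary \ref{c-2} as an immediate consequence of the Main Theorem \ref{t-main}. The conjecture $S_{d,\epsilon,\mathcal{R}}$ is exactly the instance of Shokurov's conjecture in which the prescribed family of couples is $\mathcal{R}$, and Theorem \ref{t-main} asserts that $S_{d,\epsilon,\mathcal{P}}$ holds whenever $\mathcal{P}$ is a bounded family. So the only thing to verify is that, for the fixed data $d,\epsilon,\lambda$ with $d\le 3$, the set $\mathcal{R}$ of couples $(F,\Supp B_F)$ is a bounded family of couples; granting this, Theorem \ref{t-main} supplies the constant $\delta>0$ and, for each $f\colon X\to Z$ as in the conjecture, an effective representative $M_Z\ge 0$ of the moduli part with $(Z,B_Z+M_Z)$ being $\delta$-lc.

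Thus the whole content is the boundedness of $\mathcal{R}$ when $d\le 3$. Every pair $(F,B_F)$ contributing to $\mathcal{R}$ has $\dim F\le d-1\le 2$, and I would argue by the value of $\dim F$. If $\dim F=0$ then $F$ is a point and there is nothing to prove. If $\dim F=1$, then $F$ being normal and of Fano type forces $F\cong\PP^1$, and $K_F+B_F\sim_\R 0$ gives $\deg B_F=2$; since every nonzero coefficient of $B_F$ is $\ge\lambda$, the support $\Supp B_F$ consists of at most $2/\lambda$ points, so the couples $(\PP^1,\Supp B_F)$ form a bounded family. The essential case is $\dim F=2$: here $F$ is a Fano type surface, $(F,B_F)$ is $\epsilon$-lc with $K_F+B_F\sim_\R 0$, and the nonzero coefficients of $B_F$ are $\ge\lambda$. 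Boundedness of such pairs, hence of the associated couples $(F,\Supp B_F)$, is exactly what Theorem \ref{t-bnd-surfaces} provides (resting on Alexeev-type boundedness of $\epsilon$-lc log del Pezzo surfaces together with the lower bound $\lambda$ on coefficients).

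Combining the three cases shows that $\mathcal{R}$ is a bounded family of couples, and applying Theorem \ref{t-main} with $\mathcal{P}=\mathcal{R}$ finishes the proof. The main, indeed only, obstacle is the surface boundedness input \ref{t-bnd-surfaces}, which is precisely why the corollary is restricted to $d\le 3$: in higher dimension the analogous boundedness of $\mathcal{R}$ is open, and as explained in Remark \ref{rem-bnd-R} a proof of it would in turn invoke Theorem \ref{t-main}, so there is no routine extension of the argument to larger $d$.
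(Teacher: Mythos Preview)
Your proposal is correct and follows exactly the paper's own argument: the paper's proof is the single line ``This follows from Theorems \ref{t-bnd-surfaces} and \ref{t-main},'' and you have simply unpacked this, noting that Theorem \ref{t-bnd-surfaces} gives the boundedness of $\mathcal{R}$ when $d\le 3$ and Theorem \ref{t-main} then yields the conjecture. Your case-by-case discussion for $\dim F\le 2$ is a faithful expansion of what Theorem \ref{t-bnd-surfaces} already encapsulates.
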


Under some extra assumptions, the boundedness of $\mathcal{R}$ is known in any dimension.
More precisely: 
let $d$ be a natural number, $\epsilon>0$ a real number, and $\Lambda\subset [0,1]$ a finite set 
of real numbers. Consider the pairs $(F,B_F)$ satisfying:

$\bullet$ $(F,B_F)$ is $\epsilon$-lc and of dimension $\le d-1$, 

$\bullet$ $K_F+B_F\sim_\R 0$,

$\bullet$ $F$ is projective and $-K_F$ is ample, i.e. $F$ is a Fano variety,

$\bullet$ the coefficients of $B_F$ belong to $\Lambda$.

{\flushleft Let} $\mathcal{Q}$ be the set of the couples $(F,\Supp B_F)$. 
By  [\ref{HMX}, Corollary 1.7],  $\mathcal{Q}$ is a bounded family.

\begin{cor}\label{c-main}
Conjecture $S_{d,\epsilon, \mathcal{Q}}$ holds for the above data $d,\epsilon,\mathcal{Q}$.
\end{cor}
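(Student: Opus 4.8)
The plan is to deduce Corollary \ref{c-main} directly from the Main Theorem \ref{t-main}. By definition, $S_{d,\epsilon,\mathcal{Q}}$ is nothing other than the instance of $S_{d,\epsilon,\mathcal{P}}$ obtained by taking the set of couples $\mathcal{P}$ to be $\mathcal{Q}$. Theorem \ref{t-main} asserts that $S_{d,\epsilon,\mathcal{P}}$ holds whenever $\mathcal{P}$ is a bounded family of couples. So the entire task reduces to checking that $\mathcal{Q}$ is a bounded family of couples; once this is known, Theorem \ref{t-main} supplies a $\delta>0$ depending on $d$, $\epsilon$, and $\mathcal{Q}$ (equivalently on $d,\epsilon,\Lambda$), and for any data $(X,B)\to Z$ satisfying the hypotheses of $S_{d,\epsilon,\mathcal{Q}}$ it produces an $\R$-divisor $M_Z\ge 0$ representing the moduli part with $(Z,B_Z+M_Z)$ being $\delta$-lc.

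The boundedness of $\mathcal{Q}$ is exactly the statement quoted from [\ref{HMX}, Corollary 1.7]: the pairs $(F,B_F)$ with $F$ a Fano variety of dimension $\le d-1$, with $(F,B_F)$ being $\epsilon$-lc, with $K_F+B_F\sim_\R 0$, and with the coefficients of $B_F$ confined to the fixed finite set $\Lambda\subset[0,1]$, form a bounded family. The only small point to record is that boundedness of these pairs passes to boundedness of the associated couples $(F,\Supp B_F)$: this is immediate, since $\Supp B_F$ is a sub-sum of the components of $B_F$, so a bounding family for the pairs (with its finitely many possible coefficient vectors) bounds the reduced divisors $\Supp B_F$ as well. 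Thus $\mathcal{Q}$ is a bounded family of couples in the sense of \ref{s-bnd-couples}.

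Combining the two inputs finishes the proof: the hypotheses of $S_{d,\epsilon,\mathcal{Q}}$ — namely that $(X,B)$ is $\epsilon$-lc of dimension $d$, that $K_X+B\sim_\R 0/Z$, that the general fibres of $f$ are of Fano type, and that $(F,\Supp B_F)$ is isomorphic in codimension one with a member of $\mathcal{Q}$ — are precisely the hypotheses of $S_{d,\epsilon,\mathcal{P}}$ for $\mathcal{P}=\mathcal{Q}$, and the latter conjecture is verified by Theorem \ref{t-main} because $\mathcal{Q}$ is bounded. There is no genuine obstacle internal to this corollary; all the substance sits in Theorem \ref{t-main} (whose proof exploits [\ref{HMX}, Theorem 1.3] on volumes of big log divisors) and in the boundedness result [\ref{HMX}, Corollary 1.7]. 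The only thing worth a moment's care is matching conventions — log pairs with $\R$-boundary of coefficients in a finite set versus couples with no singularity hypothesis beyond normality — so that the family $\mathcal{Q}$ as defined here is literally the one to which [\ref{HMX}, Corollary 1.7] applies; this is routine.
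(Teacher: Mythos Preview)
Your proposal is correct and matches the paper's own proof, which simply says the corollary is immediate from Theorem~\ref{t-main} together with [\ref{HMX}, Corollary~1.7]. Your additional remarks on passing from bounded pairs to bounded couples and on matching conventions are fine elaborations but add nothing essential beyond the one-line argument in the paper.
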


For surfaces we can verify $S_{d,\epsilon, \mathcal{P}}$ without boundedness assumptions:

\begin{cor}\label{c-dim2}
Conjecture $S_{2,\epsilon, \mathcal{P}}$ holds. More generally:
Conjecture $S_{d,\epsilon, \mathcal{P}}$ holds for those $(X,B)$ and $f\colon X\to Z$ 
with $d\le \dim Z+ 1$. \\
\end{cor}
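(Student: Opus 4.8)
The plan is to reduce everything to the main theorem, Theorem \ref{t-main}, after first disposing of the birational case and then, in the remaining case, replacing $(X,B)$ by an auxiliary pair whose fibre couple is bounded. Since $d\le \dim Z+1$, a general fibre $F$ of $f$ has $\dim F=d-\dim Z\le 1$, so there are just two cases. If $\dim F=0$ then $f$ is birational; here the moduli part is trivial, so I would take $M_Z=0$, and since $K_X+B\sim_\R 0/Z$ with $f$ birational we have $K_X+B=f^*(K_Z+B_Z)$ with $B_Z=f_*B\ge 0$. As this relation is crepant, $(Z,B_Z)$ is again $\epsilon$-lc, so $\delta=\epsilon$ works in this case.

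So assume $\dim F=1$. Then $F$, being a Fano type curve, is $\PP^1$, and from $(K_X+B)|_F=K_F+B_F\sim_\R 0$ we get $\deg B_F=2$; since $(F,B_F)$ is $\epsilon$-lc its coefficients are $\le 1-\epsilon$, so $B_F$ is supported on at least three points. The obstruction to invoking Theorem \ref{t-main} directly is that $(F,\Supp B_F)$ --- namely $\PP^1$ with an a priori unbounded number of marked points --- need not lie in a bounded family. I would get around this as follows. After a $\Q$-factorialization, a birational modification of $X$ over $Z$, and a generically finite base change, one may assume $f$ is generically a $\PP^1$-bundle and that $\Supp B$ contains at least three horizontal prime divisors $S_1,S_2,S_3$ which are sections. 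Put $B^\circ=\tfrac{2}{3}(S_1+S_2+S_3)+V$, where $V\ge 0$ is vertical and chosen so that $K_X+B^\circ\sim_\R 0/Z$; such $V$ exists because $-K_X-\tfrac{2}{3}(S_1+S_2+S_3)$ is $\R$-trivial on general fibres and hence $\R$-linearly equivalent over $Z$ to a vertical divisor. The general fibre couple of $(X,B^\circ)\to Z$ is now $(\PP^1,\{\text{3 points}\})$, a single couple up to isomorphism, hence a bounded family. Provided one can arrange that $(X,B^\circ)$ is $\epsilon^\circ$-lc for some $\epsilon^\circ>0$ depending only on $d$ and $\epsilon$, Theorem \ref{t-main} produces $M_Z^\circ\ge 0$ with $(Z,B_Z^\circ+M_Z^\circ)$ being $\delta^\circ$-lc. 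Finally, choosing the $S_i$ and $V$ so that $B^\circ\ge B$ forces $B_Z^\circ\ge B_Z$, and with a compatible choice of moduli representatives $B_Z^\circ+M_Z^\circ\ge B_Z+M_Z$, so the original pair $(Z,B_Z+M_Z)$ inherits the $\delta^\circ$-lc property; one then descends along the base change using the standard formulas for how the discriminant and moduli parts transform.

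The hard part will be the construction of $B^\circ$. The comparison step needs $B^\circ\ge B$ together with $K_X+B^\circ\sim_\R 0/Z$, which forces $V$ to absorb $-K_X-\tfrac{2}{3}(S_1+S_2+S_3)$ and the leftover part of $B$, possibly with coefficients close to $1$ along fibre components; but Theorem \ref{t-main} needs $(X,B^\circ)$ to be $\epsilon^\circ$-lc for a fixed $\epsilon^\circ$. Reconciling these two demands is where the Fano type hypothesis on the general fibres and the $\epsilon$-lc-ness of $(X,B)$ enter crucially: they bound the multiplicities of fibre components --- hence keep the coefficients of $B_Z$ and of $V$ bounded away from $1$ --- and one then runs an MMP to improve the singularities of $(X,B^\circ)$ while preserving the discriminant and moduli parts over $Z$. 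An alternative route that avoids Theorem \ref{t-main} altogether in this case is to analyse $B_Z+M_Z$ directly, using that for a $\PP^1$-fibration the moduli part is, after a birational modification of $Z$, a nonnegative combination of pullbacks of the boundary of $\overline{M}_{0,4}\cong\PP^1$ under the cross-ratio morphisms, and to bound the resulting multiplicities by the same control on fibre components; but the reduction to the main theorem seems cleaner given the machinery already in place.
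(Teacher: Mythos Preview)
Your birational case is correct and matches the paper. In the remaining case, however, the argument is circular. You need $(X,B^\circ)$ to be $\epsilon^\circ$-lc for a fixed $\epsilon^\circ$, and to arrange this you assert that the $\epsilon$-lc-ness of $(X,B)$ and the Fano type hypothesis ``bound the multiplicities of fibre components --- hence keep the coefficients of $B_Z$ and of $V$ bounded away from $1$.'' But bounding the coefficients of $B_Z$ is exactly the statement to be proved: once one has reduced to $\dim Z=1$ (as both you and the paper do), the whole content of the corollary is that the lc threshold of $f^*D$ with respect to $(X,B)$ is bounded below, i.e.\ that the coefficient of $D$ in $B_Z$ is bounded away from $1$. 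You cannot invoke that to control $V$. There is a second, independent problem: requiring $B^\circ\ge B$ with horizontal part $\tfrac{2}{3}(S_1+S_2+S_3)$ is in general impossible, since $B_F$ may have a component of coefficient $>\tfrac{2}{3}$ or more than three components, and a vertical $V$ cannot make up a horizontal deficit. The base-change descent is also not innocent here.

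The paper follows the same overall philosophy --- replace $B$ by a boundary whose fibre couple is bounded, then apply the main theorem --- but supplies the missing ingredient via the theory of complements. After reducing to $\dim X=2$, $\dim Z=1$, passing to the minimal resolution and a $\PP^1$-bundle, and performing an elementary transformation onto a model $X'\to Z$, the problem becomes bounding the multiplicity $m'$ in $f'^*D=m'T'$. At this point one invokes \emph{boundedness of $\epsilon'$-lc complements for surfaces}: there exist $\epsilon''>0$ and a finite set $\Lambda$, depending only on $\epsilon'$, and a boundary $\Delta'$ with coefficients in $\Lambda$ such that $(X',\Delta')$ is $\epsilon''$-lc and $K_{X'}+\Delta'\sim_\Q 0/Z$. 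Now the general fibre couple of $(X',\Delta')$ lies in the bounded family $\mathcal{R}$ of Corollary~\ref{c-2}, and applying that corollary bounds $m'$. The complement result is precisely what produces a replacement boundary with \emph{both} a fixed coefficient set \emph{and} controlled singularities simultaneously; your ad hoc $B^\circ$ tries to achieve the first by hand but cannot secure the second without assuming the conclusion.
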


We say a few words about the proof of Theorem \ref{t-main}. 
The difficult part of the theorem is to deal with the discriminant part $B_Z$ since 
by applying a result of Ambro [\ref{am05}] we can control the moduli part $M_Z$ 
(actually we have to understand the discriminant b-divisor $\mathcal{B}_Z$ rather than just 
$B_Z$).  
By taking hyperplane sections of $Z$ one can reduce the problem to the case $\dim Z=1$. 
Here one is mainly concerned about bounding the multiplicities of each fibre of some fibration 
birational to $f$.
Mori and Prokhorov  [\ref{MP}, Theorem 1.1] do this by using the orbifold Riemann-Roch theorem 
for varieties of dimension $3$ with terminal singularities. Unfortunately, this approach 
does not work in higher dimensions. Instead of Riemann-Roch, we use  
the boundedness of volumes of big log divisors [\ref{HMX}, Theorem 1.3].

Shokurov has another approach to the theorem: as far as I understand he is trying to construct a compactified 
coarse moduli 
space for the log general fibres and then recover information about $f$ from the moduli space. 
Our approach is more direct and it does not rely on the existence of such a moduli space. 
However, the techniques developed in this paper might in fact be useful to construct such moduli spaces.\\ 

\textbf{Acknowledgements:} This work was supported by a grant from Leverhulme. 
Thanks to Yuri Prokhorov for useful discussions on his results with Mori.


\vspace{0.5cm}
\section{Preliminaries}

\subsection{Pairs.}\label{s-pairs}
We work over an algebraically closed field $k$ of characteristic zero: all the varieties 
and schemes are over $k$ unless stated otherwise. 
A \emph{sub-pair} $(X,B)$ consists of a normal quasi-projective variety $X$ and  
a sub-boundary $B$, that is, an $\R$-divisor on $X$ with
coefficients in $(-\infty,1]$ such that $K_X+B$ is $\mathbb{R}$-Cartier. 
For a prime divisor $D$ on some birational model of $X$ with a
nonempty centre on $X$, $a(D,X,B)$
denotes the log discrepancy. We say that $(X,B)$ is \emph{$\epsilon$-lc} if $a(D,X,B)\ge \epsilon$ for every 
prime divisor $D$ on birational models of $X$ (if $B=0$, we also say that $X$ is {$\epsilon$-lc}). 
This is equivalent to the following: let $g\colon Y\to X$ be any 
projective birational morphism from a normal variety $Y$ and write $K_Y+B_Y:=g^*(K_X+B)$; then every coefficient of 
$B_Y$ is $\le 1-\epsilon$.  

We call a sub-pair $(X,B)$ \emph{lc} if it is $0$-lc. We call it \emph{klt} if it is $\epsilon$-lc 
for some $\epsilon>0$.
A sub-pair $(X,B)$ is called a \emph{pair} if the coefficients of $B$ are non-negative: in this 
case we call $B$ a boundary. We refer to [\ref{KM}] for standard definitions and results 
on singularities of pairs and the log minimal model program.

Let $(X,B)$ be a lc sub-pair and $M\ge 0$ an $\R$-Cartier divisor. The \emph{lc threshold} of 
$M$ with respect to $(X,B)$ is the largest real number $t$ so that $(X,B+tM)$ is lc.

A \emph{contraction} $f\colon X\to Z$ is a projective morphism of quasi-projective varieties 
 with $f_*\x{O}_X=\x{O}_Z$. A general fibre of $f$ is a fibre over a closed point belonging 
to some fixed open set $U\subset Z$. In practice, $U$ is understood from the context 
and we might shrink it without mention. 
If $(X,B)$ is a pair, a \emph{log general fibre} of $(X,B)$ and 
$f$ is as $(F,B_F)$ where $F$ is a general fibre of $f$ and $K_F+B_F=(K_X+B)|_F$.

\subsection{Minimal models and Mori fibre spaces.}\label{s-m-models}
Let $(X,B)$ be a lc pair, $(Y,B_Y)$ a $\Q$-factorial dlt pair, $X\to Z$ and $Y\to Z$ 
contractions,  $\phi\colon X\bir Y/Z$ a birational 
map such that $\phi^{-1}$ does not contract divisors, and $B_Y=\phi_*B$.   Moreover, assume that 
$$
a(D,X,B)\le a(D,Y,B_Y)
$$ 
for any prime divisor $D$ on birational models of $X$ and assume that the strict inequality holds for 
any prime divisor $D$ on $X$ which is exceptional/$Y$.
We say that $(Y,B_Y)$ is a \emph{log minimal model} of $(X,B)$ over $Z$ if $K_Y+B_Y$ is nef$/Z$. 
On the other hand, we say that $(Y,B_Y)$ is a \emph{Mori fibre space} of $(X,B)$ over $Z$ if 
there is a $K_Y+B_Y$-negative extremal contraction $Y\to Y'/Z$ such that $\dim Y'<\dim Y$. 
These definitions follow the traditional definitions of log minimal models and Mori fibre spaces. 
The corresponding definitions in [\ref{B-lc-flips}] are more general but we do not need such 
generality in this paper.

Assume that $(X,B)$ is a $\Q$-factorial klt pair and $f\colon X\to Z$ a contraction with $K_X+B$ or $B$ big$/Z$.  
Also assume that $C\ge 0$, $(X,B+C)$ is klt, and $K_X+B+C$ is nef$/Z$. 
Then by [\ref{BCHM}], any LMMP$/Z$ on $K_X+B$ with scaling of $C$ 
ends up with a log minimal model or a Mori fibre space over $Z$.

\subsection{Fano type varieties.}\label{s-Fano-type}
A projective variety $X$ is said to be of \emph{Fano type} if there is a boundary 
$C$ such that $(X,C)$ is a klt pair and $-(K_X+C)$ is ample. Note that if there is 
another boundary $B$ with $K_X+B\sim_\R 0$, then $B$ is big because $-K_X$ is big.

\begin{lem}\label{l-Fano-type}
Assume that $\phi\colon X\bir X'$ is an isomorphism in codimension one between normal 
projective varieties. If $X$ is of Fano type, then $X'$ is also of Fano type.
\end{lem}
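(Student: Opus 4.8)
The plan is \emph{not} to push the Fano-type structure forward directly: ampleness is destroyed by isomorphisms in codimension one (think of flops), so if $-(K_X+C)$ is ample there is no reason for $-(K_{X'}+\phi_*C)$ to be ample. Instead I would use the reformulation (essentially the Prokhorov--Shokurov characterisation): a normal projective variety $Y$ is of Fano type if and only if there is a boundary $\Delta$ with $(Y,\Delta)$ klt, $K_Y+\Delta\sim_\R 0$, and $\Delta$ big. The three conditions on the right are visibly preserved by an isomorphism in codimension one, which is what makes the lemma go.

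Concretely: since $X$ is of Fano type, choose $C$ with $(X,C)$ klt and $-(K_X+C)$ ample, write $-(K_X+C)\sim_\R A$ with $A$ ample, and take a general effective $D\sim_\R A$ (a small positive combination of general members of very ample linear systems). Then $(X,B)$ with $B:=C+D$ is still klt by Bertini, $K_X+B\sim_\R 0$, and $B\ge D$ is big. Now set $B':=\phi_*B$. As $\phi$ restricts to an isomorphism between big open subsets $U\subseteq X$ and $U'\subseteq X'$ with complements of codimension $\ge 2$, prime divisors on $X$ and $X'$ match up with the same multiplicities in $B$ and $B'$, so $B'$ is again a boundary; writing $K_X+B=\sum r_i\,\mathrm{div}(f_i)$ and pushing forward gives $K_{X'}+B'=\sum r_i\,\mathrm{div}(f_i\circ\phi^{-1})\sim_\R 0$ (in particular $\R$-Cartier); and $\vol(B')=\vol(B)>0$ because sections of $\lfloor mB\rfloor$ extend across codimension $\ge 2$, so $B'$ is big.

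It remains to see $(X',B')$ is klt and to recover the Fano-type structure. For klt-ness, take a common log resolution $p\colon W\to X$, $q\colon W\to X'$ of $\phi$; since $\phi^{-1}\circ q=p$ as rational maps, $p^*(K_X+B)=\sum r_i\,\mathrm{div}(f_i)=q^*(K_{X'}+B')$ on $W$, so the two log pullbacks coincide and $a(E,X,B)=a(E,X',B')$ for every prime divisor $E$ over $X$; hence $(X',B')$ is klt. To conclude, write $B'\sim_\R A'+G$ with $A'$ ample and $G\ge 0$, and for small $\epsilon>0$ put $\Delta_\epsilon:=(1-\epsilon)B'+\epsilon G$; its coefficients are close to those of $B'$ (which are $<1$ by klt-ness), so $\Delta_\epsilon$ is a boundary and $(X',\Delta_\epsilon)$ is klt by openness of the klt condition, while $B'\sim_\R\epsilon A'+\Delta_\epsilon$ yields $-(K_{X'}+\Delta_\epsilon)\sim_\R\epsilon A'$, which is ample. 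Thus $X'$ is of Fano type. I expect the only genuinely delicate point to be this final perturbation — keeping $\Delta_\epsilon$ a boundary and keeping it klt — the rest being the routine but essential bookkeeping that bigness, $\R$-linear triviality and log discrepancies survive an isomorphism in codimension one.
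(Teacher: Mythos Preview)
Your proof is correct and follows essentially the same route as the paper's: both pass from the ample anti-log-canonical to a klt pair $(X,B)$ with $K_X+B\sim_\R 0$ and big boundary, push this data across $\phi$, then write the big pushforward as ample plus effective and perturb to recover an ample anti-log-canonical on $X'$. The only cosmetic difference is that the paper keeps the original $C$ and the added $D$ separate (perturbing $D'$ toward $G'$ while leaving $C'$ intact), whereas you merge them into a single $B$ and perturb $B'$ toward $G$; the mechanism is identical.
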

\begin{proof}
There is a boundary $C$ such that $(X,C)$ is a klt pair and $-(K_X+C)$ is ample. 
There is an $\R$-Cartier divisor $D$ such that $(X,C+D)$ is klt and $K_X+C+D\sim_\R 0$.
Obviously, $D$ is ample. Now, $(X',C'+D')$ is klt, $K_{X'}+C'+D'\sim_\R 0$, and 
$D'$ is big where $C'$ denotes the birational transform of $C$ (similar notation for the other divisors). 
We can write $D'\sim_\R A'+G'$ where $A'$ is ample and $G'\ge 0$. 
Thus, 
$$
(X',C'+(1-t)D'+tA'+tG')
$$ 
is klt and 
$$
K_{X'}+C'+(1-t)D'+tA'+tG'\sim_\R 0
$$
if $t>0$ is sufficiently small. Put $\Delta'=C'+(1-t)D'+tG'$. Then, 
$(X',\Delta')$ is klt and $-(K_{X'}+\Delta')$ is ample hence $X'$ is also of Fano type.
\end{proof}

\subsection{B-divisors.}
Let $X$ be a normal variety. An \emph{$\R$-b-divisor} $\mathcal{D}$ on $X$ is a collection of 
$\R$-divisors $D_Y$ for each birational contraction $Y\to X$ so that if we have a birational 
contraction $\pi\colon Y'\to Y/X$, then $\pi_*D_{Y'}=D_Y$. In birational geometry, one often 
has to consider all the resolutions of a variety $X$ in order to understand properties of 
a divisor on $X$. This naturally leads to b-divisors which were defined by Shokurov [\ref{log-models}][\ref{pl-flips}].

\subsection{Adjunction for fibre spaces}\label{s-adjunction} 
Let $f\colon X\to Z$ be a contraction of normal varieties, and
$(X,B)$ a klt sub-pair such that $K_X+B\sim_\R 0/Z$, that is, we have an equivalence $K_X+B\sim_\R f^*N$ for
some $\R$-Cartier $\R$-divisor $N$. 
By a construction of Kawamata [\ref{ka97}][\ref{ka98}] we have
a decomposition
$$
N\sim_\R K_Z+B_Z+M_Z
$$
where
$B_Z$ is defined using the singularities of $(X,B)$ and of the fibres of $f$. 
The part $B_Z$ is called the \emph{discriminant part} and the
part $M_Z$ is called the \emph{moduli part}. 
 More precisely, $B_Z$ is defined as follows: 
for each prime divisor $D$ on $Z$, let $t$ be the lc threshold of $f^*D$ over the 
generic point of $D$, with respect 
to the pair $(X,B)$; then let $(1-t)$ be the coefficient of $D$ in $B_Z$. The moduli part $M_Z$ is 
then determined as an $\R$-linear equivalence class.

Consider a commutative diagram
$$
\xymatrix{
X' \ar[r]^{\tau} \ar[d]^{f'} &  X\ar[d]^{f}\\
Z' \ar[r]^{\sigma} & Z }
$$
in which $X',Z'$ are normal, $\sigma,\tau$ are birational contractions, and $f'$ is a contraction. 
Let $K_{X'}+B':=\tau^*(K_X+B)$. 
Using the relation $K_{X'}+B'\sim_\R 0/Z'$, we can similarly define a decomposition 
$$
\sigma^*N\sim_\R K_{Z'}+B_{Z'}+M_{Z'}
$$
which satisfies  $B_Z=\sigma_*B_{Z'}$ and $M_Z\sim_\R \sigma_*M_{Z'}$. Putting all the $B_{Z'}$ together for 
all the possible $Z'$ determines 
an $\R$-b-divisor $\mathcal{B}_Z$. We could also choose the $M_{Z'}$ consistently so that 
putting all the $M_{Z'}$ together we get an $\R$-b-divisor $\mathcal{M}_Z$.

Now assume that $X,Z$ are projective and that $B$ is effective with rational coefficients. 
Kawamata [\ref{ka97}][\ref{ka98}] showed that 
if $Z'$ is a sufficiently high resolution then $M_{Z'}$ is a nef $\Q$-divisor. 
Following ideas of Kawamata [\ref{ka85}], Ambro [\ref{am05}] proved that, perhaps after replacing $Z'$ 
with a higher resolution, $M_{Z'}$ satisfies a pullback property: 
for any other resolution $\pi\colon Z''\to Z'$ we have $M_{Z''}\sim_\Q \pi^*M_{Z'}$. Moreover, 
he showed that $M_{Z'}$ is the pullback of a nef and big divisor under some contraction $Z'\to T$.
We call such a $Z'$ an \emph{Ambro model}.

\subsection{Volume of divisors} 
Let $X$ be a normal projective variety of dimension $d$ and $D$ an integral divisor on $X$. The \emph{volume} of $D$ 
denoted by $\vol(D)$ is defined as 
$$
\vol(D)=\limsup_{m\to +\infty} \frac{h^0(\x{O}_X(mD))}{m^d/d!}
$$
In some places we also use the notation $\vol(\x{O}_X(D))$ instead of $\vol(D)$.

If $D$ is a $\Q$-divisor, we can define $\vol(D)=\frac{1}{m^d}\vol(mD)$ for some $m>0$ with $mD$ being integral. 
If $D$ is not big, then it is obvious that $\vol(D)=0$.
If $D$ is nef, then $\vol(D)=D^d$ which follows from the Riemann-Roch theorem.  
Ein-Lazarsfeld-Musta\c{t}\u{a}-Nakamaye-Popa [\ref{ELMNP}] treat this topic in detail.

Assume that $\Omega$ is a set of rational numbers satisfying the descending chain condition 
(DCC). Hacon-M$^{\rm c}$Kernan-Xu [\ref{HMX}, Theorem 1.3] (see also [\ref{HMX2}])
proved that there is a number $\theta>0$ depending only on $d$ and $\Omega$ such that:
if $(X,B)$ is a projective lc pair of dimension at most $d$, if the coefficients of $B$ belong 
to $\Omega$, and if $K_X+B$ is big, then $\vol(K_X+B)\ge \theta$. We will apply this in the proof of 
Proposition \ref{p-2}.

\subsection{Couples and bounded families}\label{s-bnd-couples}
A \emph{couple} $(F,D_F)$ consists of a normal projective variety $F$ and a  divisor 
$D_F$ on $F$ whose coefficients are all equal to $1$, i.e. $D_F$ is a reduced divisor. 
The reason we call $(F,D_F)$ a couple rather than a pair is that we are concerned with 
$D_F$ rather than $K_F+D_F$ and we do not want to assume that  $K_F+D_F$ is $\Q$-Cartier or 
that it has nice singularities.

Two couples  $(F,D_F)$ and $(F',D_{F'})$ 
are \emph{isomorphic in codimension one} if there is a birational isomorphism 
$\phi\colon F\bir F'$ which is an isomorphism in codimension one such that $\phi_*D_F=D_{F'}$.
The two couples are \emph{isomorphic} if $\phi$ is an isomorphism. 

We say that a set of couples $\mathcal{P}$ is \emph{a bounded family} if there is a projective morphism 
$f\colon S\to T$ of Noetherian schemes over $k$ and a closed subscheme $D$ of $S$ 
such that for each $(F,D_F)\in \mathcal{P}$ there is a closed point $t\in T$ and an isomorphism between 
$(F,D_F)$ and $(S_t,\tilde{D}_t)$ 
where $S_t,D_t$ are the fibres over $t$ of the morphisms $S\to T$ and $D\to T$ respectively and 
$\tilde{D}_t\le D_t$: in particular, 
for such $t$ the scheme $S_t$ is a normal projective variety and the subscheme $D_t$ 
is a reduced divisor on $S_t$.

\begin{lem}\label{l-bnd-volume}
Let $\mathcal{P}$ be a bounded family of couples. Then, there is a natural number $v$ depending 
only on $\mathcal{P}$ such that for any $(F,D_F)\in \mathcal{P}$ the volume 
$\vol(D_F)\le v$.
\end{lem}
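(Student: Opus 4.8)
The plan is to exploit boundedness to reduce to a single family $f\colon S\to T$ and then use the fact that volume is essentially a constructible/upper-semicontinuous invariant in families, so that a uniform bound exists simply because $T$ has finitely many irreducible components. More precisely, given a bounded family $\mathcal P$, fix the projective morphism $f\colon S\to T$ and the closed subscheme $D\subset S$ witnessing boundedness. After replacing $T$ by a stratification into finitely many locally closed reduced subschemes (and decomposing into irreducible components), I may assume $T$ is an integral scheme over which $f$ is flat, $S$ is a normal variety, and $D_t$ is a reduced divisor for every closed point $t$; each $(F,D_F)\in\mathcal P$ is isomorphic to $(S_t,\tilde D_t)$ with $\tilde D_t\le D_t$. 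Since $\tilde D_t\le D_t$ and both are effective, monotonicity of volume on a fixed variety gives $\vol(D_F)=\vol_{S_t}(\tilde D_t)\le \vol_{S_t}(D_t)$, so it suffices to bound $\vol_{S_t}(D_t)$ uniformly in $t$.

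Next I would bound $\vol_{S_t}(D_t)$ by the self-intersection of an ample divisor on $S$ restricted to fibres. Choose an $f$-very ample divisor $A$ on $S$; after possibly twisting by the pullback of an ample divisor from $T$ and enlarging coefficients, I can arrange that $A - D$ is $f$-nef (or at least that $A|_{S_t} - D_t$ is pseudoeffective for all $t$), using that the family of subvarieties $D_t$ has bounded "size" relative to $A$. Then for each closed point $t$,
$$
\vol_{S_t}(D_t)\le \vol_{S_t}(A|_{S_t}) \le (A|_{S_t})^{\dim S_t} = A^{\dim S_t}\cdot [S_t],
$$
where the middle inequality is monotonicity of volume under $D_t\le A|_{S_t}$ and the last equality holds because $A|_{S_t}$ is ample, hence nef, so its volume is its top self-intersection. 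The intersection number $A^{\dim S_t}\cdot[S_t]$ is locally constant on $T$ (flatness of $f$ makes the fibre dimension and these intersection numbers constant on each component), so it takes only finitely many values as $t$ ranges over the finitely many components of $T$; take $v$ to be the maximum, rounded up to an integer.

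The main obstacle is the reduction to a well-behaved family — specifically, ensuring that after stratification the morphism $f$ is flat with the generic fibre normal and that $D$ restricts to the reduced divisor $D_t$ on closed points, so that the divisors in question are genuine Cartier (or at least Weil) divisors to which volume and intersection theory apply. This is where one uses generic flatness and Noetherian induction on $T$, together with the hypothesis built into the definition of bounded family that $S_t$ is a normal projective variety and $D_t$ a reduced divisor for the relevant $t$. A secondary technical point is arranging the comparison divisor $A$ so that $A|_{S_t}-D_t$ is effective or pseudoeffective uniformly; this follows because $D$ is a fixed closed subscheme of $S$ and $A$ can be chosen $f$-very ample with large enough twist, so $A|_{S_t}$ dominates the bounded family of divisors $\{D_t\}$ — but it must be checked componentwise after the stratification. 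Once these points are in place, the bound is immediate from monotonicity of volume and constancy of intersection numbers in flat families.
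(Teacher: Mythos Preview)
Your strategy matches the paper's: both reduce to a single flat family over a reduced Noetherian base, dominate the Weil divisor $D_t$ on each fibre by a fixed multiple of a relatively very ample class, and then bound the resulting volumes by the finitely many Hilbert polynomials (equivalently, top self-intersections) that occur. The reduction step and the endgame are fine.

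The gap is the middle step, which you flag yourself but do not actually carry out: producing a uniform $m$ (or finite list of $m$'s) such that $mA|_{S_t}-D_t$ is \emph{effective} for every relevant $t$. Your phrasing ``$A-D$ is $f$-nef'' is not meaningful here, since $D$ is only a closed subscheme and $D_t$ is typically a Weil divisor on a normal but singular $S_t$; and ``pseudoeffective'' is not enough for the monotonicity $\vol(D_t)\le\vol(A|_{S_t})$ --- you need an effective difference so that $H^0(kD_t)\hookrightarrow H^0(kA|_{S_t})$. Simply choosing $m$ so that $\mathcal I_D(m)$ has a global section on $S$ does not suffice either, since that section could vanish identically on some fibre $S_t$.

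The paper handles this by a concrete sheaf argument: over affine $T$, take a surjection $\mathcal E=\bigoplus\mathcal O_S(-m)\twoheadrightarrow\mathcal I_D$; restrict to $S_t$, dualize, and use reflexivity to extend the resulting injection on the smooth locus to an injection $\mathcal O_{S_t}(D_t)\hookrightarrow\mathcal E_t^\vee=\bigoplus\mathcal O_{S_t}(m)$; then project to a single summand $\mathcal O_{S_t}(m_t)$. This gives the effective comparison $m_tH_t-D_t\ge 0$ with $m_t$ drawn from a fixed finite set, exactly what you need. An alternative route in the spirit of your stratification is: after making both $S$ and $D$ flat over an integral $T$, the ideal sheaf $\mathcal I_D$ is flat and $(\mathcal I_D)_t=\mathcal I_{D_t}$, so $t\mapsto h^0(\mathcal I_{D_t}(m))$ is upper semicontinuous; pick $m$ with $h^0$ positive at the generic point and it is then positive at every closed point, giving the required effective hypersurface containing $D_t$. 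Either argument closes the gap; without one of them, the proposal is incomplete at its central step.
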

\begin{proof}
As a general principle, the numerical invariants of a bounded family are bounded. 
We will give a detailed proof. Let $f\colon S\to T$ be as in 
the definition of bounded family of couples. Let $T'$ be the reduced scheme associated to 
$T$ and let $S'=S\times_TT'$ and $D'=D\times_TT'$. Let $t$ be a closed point of $T$ and 
$S_t$ and $D_t$ the fibres over $t$ of the morphisms $S\to T$ and $D\to T$. 
These fibres are isomorphic to the corresponding fibres of the morphisms $S'\to T'$ 
and $D'\to T'$. Thus, we could assume from the beginning that $T$ is a reduced scheme. 
Moreover, we may assume that $T$ is affine by the Noetherian property.

Fix a closed embedding $S\to \PP^n_T/T$ and let $\x{O}_S(1)$ be the inverse image of $\x{O}_{\PP^n_T}(1)$. 
Since $S\to T$ is projective and $T$ affine, there is a coherent locally free sheaf $\x{E}$ which is a direct sum of invertible sheaves 
of the form $\x{O}_S(-m)$ with $m\gg 0$ admitting a surjective morphism $\x{E}\to \x{I}_D$ 
where $\x{I}_D$ is the ideal sheaf of $D$ in $S$. This gives an exact sequence $\x{E}\to \x{O}_S\to \x{O}_D\to 0$. 

Fix a closed point $t\in T$ such that the fibre $S_t$ is a normal variety and that $D_t$ is a reduced 
divisor on $S_t$. Restricting the above 
exact sequence to $S_t$ we get an exact sequence $\x{E}_t\to \x{O}_{S_t}\to \x{O}_{D_t}\to 0$. 
In particular, we have a surjection $\x{E}_t\to \x{I}_{D_t}$ where $\x{I}_{D_t}$ is the ideal sheaf 
of $D_t$ in $S_t$. 
We then get an injection 
$\x{I}_{D_t}^\vee \to \x{E}_t^\vee$ where for a coherent sheaf $\x{N}$ on $S_t$ we define
$\x{N}^\vee:=\xHom_{\x{O}_{S_t}}(\x{N},\x{O}_{S_t})$. 
Let $U$ be the smooth locus of $S_t$. Then, 
$\x{I}_{D_t}|_U=\x{O}_U(-D_t)$ hence $\x{I}_{D_t}^\vee|_U=\x{O}_U(D_t)$.
Since $\x{E}_t^\vee$ is reflexive, the injection $\x{I}_{D_t}^\vee|_U \to \x{E}_t^\vee|_U$
induces an injection $\x{O}_{S_t}(D_t)\to \x{E}_t^\vee$.

Let 
$$
0\to \x{G}'\to \x{G} \to \x{G}''\to 0
$$ 
be an exact sequence of coherent locally free sheaves 
on $S_t$ such that we are given an injection $\x{O}_{S_t}(D_t)\to \x{G}$. Then, by restricting to 
$U$ one can see that either the induced morphism $\x{O}_{S_t}(D_t)\to \x{G}''$ is injective or $\x{O}_{S_t}(D_t)$   
is mapped into the kernel of $\x{G} \to \x{G}''$ in which case we get an injection $\x{O}_{S_t}(D_t)\to \x{G}'$.

 By construction, $\x{E}_t^\vee$ is a coherent locally free sheaf which is a direct sum of 
sheaves of the form $\x{O}_{S_t}(m)$ with $m\gg 0$. Applying the last paragraph we get an injection 
$\x{O}_{S_t}(D_t)\to \x{O}_{S_t}(m_t)$ for some $m_t\gg 0$ where there are only finitely many possibilities for 
$m_t$ for all $t$ as above. 

Now, since $T$ is reduced, by the generic flatness and stratification theorem, there are only finitely 
many possibilities for the Hilbert polynomial  $\Phi_t$ of the fibre $S_t$. By definition, 
$\Phi_t(m)=\mathcal{X}(\x{O}_{S_t}(m))$. On the other hand, since $m_t$ is sufficiently large, 
$h^i(\x{O}_{S_t}(mm_t))=0$ for any $i,m>0$. Thus, $\Phi_t(mm_t)=h^0(\x{O}_{S_t}(mm_t))$ for $m>0$ and  
this in turn implies that the volume
$$
\vol(\x{O}_{S_t}(m_t))=\limsup_{m\to +\infty} \frac{h^0(\x{O}_{S_t}(mm_t))}{m^{\dim S_t}/\dim S_t!}
=\limsup_{m\to +\infty} \frac{\Phi_t(mm_t)}{m^{\dim S_t}/\dim S_t!}
$$ 
depends only on $\Phi_t$ and $m_t$. Since there are only finitely many possibilities for 
$\Phi_t$ and $m_t$, there is a natural number $v$ such that 
$\vol(\x{O}_{S_t}(m_t))\le v$ for every $t$ as above. 
Thus, $\vol(D_t)\le v$ 
because of the injection $\x{O}_{S_t}(D_t)\to \x{O}_{S_t}(m_t)$. By definition, each 
$(F,D_F)\in \mathcal{P}$ is isomorphic to some $(S_t,\tilde{D}_t)$  with
$\tilde{D}_t\le D_t$. Therefore, 
$$
\vol(D_F)=\vol(\tilde{D}_t)\le \vol(D_t)\le v
$$
and we are done.
\end{proof}

\subsection{Intersection numbers} 
In certain cases we will apply the following lemma to compare intersection numbers on a variety which may not be 
proper.

\begin{lem}\label{l-intersection}
Let $h\colon Y\to Z$ be a contraction from a normal variety $Y$ of dimension $d$ to a smooth curve $Z$. 
Assume that $L$ is a $\Q$-Cartier divisor on $Y$ which is nef$/Z$.
Pick two distinct closed points $P,Q\in Z$ and write $h^*P=\sum m_iT_i$ and $h^*Q=\sum n_jS_j$ where $T_i,S_j$ 
are the irreducible components. 
Then, 
$$
\sum m_i(L|_{T_i})^{d-1}=\sum n_j(L|_{S_j})^{d-1}
$$
\end{lem}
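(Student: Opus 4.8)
The plan is to reduce the asserted identity to the fact that the Euler characteristic of an invertible sheaf on the fibres of $h$ is locally constant along $Z$. \emph{Setup.} Since $Y$ is integral and dominates the regular one-dimensional scheme $Z$, the sheaf $\x{O}_Y$ is torsion free and hence flat over $\x{O}_Z$, so $h$ is flat; and $h$ is projective, so each scheme-theoretic fibre $Y_P:=h^{-1}(P)$ is proper over $k$. As $P$ is a Cartier divisor on the smooth curve $Z$, its pullback $h^*P$ is a Cartier divisor on $Y$, the fibre $Y_P$ is of pure dimension $d-1$, and its fundamental cycle is exactly $\sum m_iT_i$. Fix $m>0$ with $mL$ Cartier. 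By the projection formula for intersections of Cartier divisors with cycles,
$$
\bigl((mL)^{d-1}\cdot [Y_P]\bigr)=\sum m_i\bigl((mL)^{d-1}\cdot[T_i]\bigr)=\sum m_i\,(mL|_{T_i})^{d-1},
$$
each term computed on the proper $k$-scheme $T_i$; similarly over $Q$. So it is enough to prove $\bigl((mL)^{d-1}\cdot[Y_P]\bigr)=\bigl((mL)^{d-1}\cdot[Y_Q]\bigr)$.

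\emph{Intersection numbers as leading coefficients.} By Snapper's lemma applied on the proper scheme $Y_P$ of dimension $d-1$, the function $t\mapsto \chi\bigl(Y_P,\x{O}_{Y_P}(t\,mL|_{Y_P})\bigr)$ is a polynomial function of $t$ of degree $\le d-1$ whose coefficient of $t^{d-1}$ is $\tfrac{1}{(d-1)!}\bigl((mL)^{d-1}\cdot[Y_P]\bigr)$, and likewise over $Q$. Hence it suffices to show these two polynomials coincide, for which it suffices that the functions $t\mapsto\chi\bigl(Y_P,\x{O}_{Y_P}(t\,mL|_{Y_P})\bigr)$ and $t\mapsto\chi\bigl(Y_Q,\x{O}_{Y_Q}(t\,mL|_{Y_Q})\bigr)$ agree for every $t\in\Z$.

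\emph{Conclusion.} Fix $t\in\Z$. The invertible sheaf $\x{O}_Y(t\,mL)$ is flat over $Z$, being an invertible sheaf on the $Z$-flat scheme $Y$, and $h$ is projective, so by the standard base-change theorem for Euler characteristics the function $P\mapsto \chi\bigl(Y_P,\x{O}_Y(t\,mL)|_{Y_P}\bigr)=\chi\bigl(Y_P,\x{O}_{Y_P}(t\,mL|_{Y_P})\bigr)$ is locally constant on $Z$; since $Z$ is connected it is constant, hence equal at $P$ and $Q$. Letting $t$ vary, the two polynomials above agree, so their top coefficients agree, giving $\sum m_i(mL|_{T_i})^{d-1}=\sum n_j(mL|_{S_j})^{d-1}$, which upon dividing by $m^{d-1}$ is the asserted identity. (Note the argument does not in fact use that $L$ is nef over $Z$.) The only point requiring care is the bookkeeping with the possibly non-reduced fibres: that the fundamental cycle of $Y_P$ is $\sum m_iT_i$, and that the projection formula and Snapper's lemma are invoked on the proper scheme $Y_P$ rather than on the non-proper variety $Y$ — which is precisely how the non-properness of $Y$ is sidestepped.
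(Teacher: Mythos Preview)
Your proof is correct and takes a genuinely different route from the paper's. The paper first passes to a resolution $\phi\colon W\to Y$, observing that for each exceptional component $E_k$ of $\phi^*h^*P$ the restriction $\phi^*L|_{E_k}$ is not big and hence $(\phi^*L|_{E_k})^{d-1}=0$ (this is where the nef$/Z$ hypothesis is invoked, since for a nef divisor the top self-intersection equals the volume); it then compactifies $W$ and $Z$ to smooth projective varieties and concludes from the global fact that $P\equiv Q$ on a projective curve, so $h^*P\equiv h^*Q$ and the intersection with $L^{d-1}$ agrees. Your argument instead stays on $Y$, exploits the automatic flatness of $h$ over the smooth curve $Z$, and reads the intersection number off as the leading coefficient of the fibrewise Euler-characteristic polynomial via Snapper's lemma, which is constant by invariance of $\chi$ in flat projective families. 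Your approach is cleaner and more scheme-theoretic, avoids both resolution and compactification, and---as you correctly note---does not in fact use that $L$ is nef over $Z$; the paper's approach is more in the birational-geometric idiom of the surrounding arguments but builds in an unnecessary hypothesis through its detour.
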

\begin{proof}
Take a resolution $\phi\colon W\to Y$. We can write $\phi^*h^*P=\sum m_iT_i'+\sum l_kE_k$ where 
$E_k$ are prime exceptional divisors of $\phi$ and $T_i'$ is the birational transform of $T_i$. 
Since $\phi^*L|_{E_k}$ is not big, $(\phi^*L|_{E_k})^{d-1}=0$. On the other hand, since the induced 
morphism $T_i'\to T_i$ is birational  $(\phi^*L|_{T_i'})^{d-1}=(L|_{T_i})^{d-1}$. Thus, 
$$
\sum m_i(L|_{T_i})^{d-1}=\sum m_i(\phi^*L|_{T_i'})^{d-1}+\sum l_k(\phi^*L|_{E_k})^{d-1}
$$
A similar equality holds for $\phi^*h^*Q$. Therefore, by replacing $X$ with $W$ and $L$ with $\phi^*$ 
we may assume that $X$ is already smooth. 

By taking compactifications and then a resolution we can assume that there is a contraction 
$\overline{f}\colon \overline{Y}\to \overline{Z}$ of smooth projective varieties 
such that $Y\subset \overline{Y}$,  $Z\subset \overline{Z}$, and that $\overline{f}|_{Y}=f$. 
By replacing $\overline{f}$ with $f$ and $L$ with its closure in $\overline{Y}$, 
we may assume that $Y,Z$ are projective. Now, since $P-Q\equiv 0$, $h^*P\equiv h^*Q$ hence intersection with the $1$-cycle $L^{d-1}$ would be the same which implies 
that 
$$
\sum m_i(L|_{T_i})^{d-1}= L^{d-1}\cdot (\sum m_iT_i)
=  L^{d-1}\cdot h^*P
= L^{d-1}\cdot h^*Q
$$
$$
= L^{d-1}\cdot (\sum n_jS_j)
=  \sum n_j(L|_{S_j})^{d-1}
$$

\end{proof}

\vspace{0.5cm}
\section{Bounding the discriminant b-divisor}

In this section we will bound the coefficients of the discriminant b-divisor $\mathcal{B}_Z$ 
that is associated to the data in Theorem \ref{t-main}.  
First we deal with the discriminant divisor $B_Z$ and later we take care of the b-divisor.

\begin{prop}\label{p-2}
Assume that $d,\epsilon, \mathcal{P}$ are as in Conjecture $S_{d,\epsilon, \mathcal{P}}$ and that 
$\mathcal{P}$ is a bounded family of couples. Then, there is a 
real number $\delta>0$ depending only on $d,\epsilon, \mathcal{P}$ such that if $(X,B)$ and $f\colon X\to Z$ 
are as in the conjecture, then every coefficient of ${B}_Z$ is $\le 1-\delta$. 
\end{prop}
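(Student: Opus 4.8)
The plan is to reduce the problem to a one-dimensional base and then to bound, in terms of $d$, $\epsilon$, and $\mathcal{P}$, the multiplicity of a general fibre over each point of the base, using the boundedness of volumes of big log divisors. First I would fix a prime divisor $D$ on $Z$ and let $t$ be the lc threshold of $f^*D$ over the generic point of $D$ with respect to $(X,B)$, so that the coefficient of $D$ in $B_Z$ is $1-t$; the goal is a uniform lower bound $t\ge\delta$. Working over the generic point of $D$, I would cut $Z$ by general hyperplane sections through $D$ to reduce to the situation where $\dim Z=1$ and $D$ is a closed point $P\in Z$, taking care that cutting by general hyperplanes preserves $\epsilon$-lc-ness of $(X,B)$, the relation $K_X+B\sim_\R 0/Z$, the Fano type and boundedness hypotheses on the general fibre $(F,\Supp B_F)$ (the general fibre is unchanged), and the value of the relevant lc threshold. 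After passing to a $\Q$-factorial dlt model and running an appropriate LMMP$/Z$ (justified by [\ref{BCHM}] and Section \ref{s-m-models}, using that $B$ or $K_X+B$ is big$/Z$ since $F$ is of Fano type), I may replace $f$ by a birational model so that the fibre structure over $P$ is well-behaved; the key point becomes bounding the multiplicities $m_i$ in $f^*P=\sum m_iT_i$.

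Next I would exploit the adjunction/lc-threshold bookkeeping: if $t$ is the lc threshold of $f^*P$, then $(X, B+tf^*P)$ is lc but not klt along some component $T_i$ of $f^*P$, which forces $\mult_{T_i}(B + t f^*P) \ge$ something like $1$, i.e. $t\, m_i$ is controlled from below in terms of $\mult_{T_i}B$ and $\epsilon$; conversely, since $(X,B)$ is $\epsilon$-lc, the coefficients of $B$ along the $T_i$ are $\le 1-\epsilon$. Thus a lower bound on $t$ is equivalent to an upper bound on $\max_i m_i$. To bound $m_i$, I would invoke Lemma \ref{l-intersection}: choosing a second point $Q\in Z$ whose fibre $S=f^{-1}(Q)$ is a general fibre, and a suitable nef$/Z$ $\Q$-divisor $L$ on the model (for instance an ample divisor, or a divisor coming from a log structure on the fibres), I get
$$
\sum_i m_i (L|_{T_i})^{d-2} = \sum_j n_j (L|_{S_j})^{d-2},
$$
where the right-hand side is the volume-type quantity $\vol$ attached to the general fibre. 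The right-hand side is bounded above by Lemma \ref{l-bnd-volume} applied to the bounded family $\mathcal{P}$ (after relating $L|_S$ to the reduced couple $(F,\Supp B_F)$), while each term $(L|_{T_i})^{d-2}$ on the left is bounded below by a positive constant depending only on $d$ and the family — here is where the main input [\ref{HMX}, Theorem 1.3] on lower bounds for volumes of big log divisors enters: one arranges that $L|_{T_i}$ dominates $K_{T_i}+B_{T_i}$ for a log pair with DCC (indeed finite, after scaling) coefficients, forcing $(L|_{T_i})^{d-2}\ge\theta>0$. Combining, $\sum_i m_i\theta\le v$, so $\max_i m_i\le v/\theta$, giving the desired $\delta$.

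The main obstacle I anticipate is the middle step: producing a nef$/Z$ divisor $L$ on a convenient birational model for which one simultaneously controls $(L|_{S})^{d-2}$ from above via boundedness of $\mathcal{P}$ and $(L|_{T_i})^{d-2}$ from below via the Hacon--M$^{\rm c}$Kernan--Xu bound. The subtlety is that the special fibre components $T_i$ need not be of Fano type and need not lie in any bounded family, so the lower bound on their volumes cannot come from boundedness and must come from the DCC-volume theorem; this requires first arranging, via adjunction and an LMMP, that $T_i$ carries a log pair $(T_i, B_{T_i})$ with $K_{T_i}+B_{T_i}$ big and with coefficients in a fixed DCC (in fact finite) set, which is where most of the technical work — and the careful choice of model and of $L$ — will go. The hyperplane-section reduction and the lc-threshold-versus-multiplicity dictionary are comparatively routine, though one must be attentive that all boundedness constants depend only on $d,\epsilon,\mathcal{P}$ and not on the particular $(X,B)$ or $Z$.
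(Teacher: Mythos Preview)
Your proposal is correct and takes essentially the same route as the paper: reduce to $\dim Z=1$ (the paper does this inductively via Lemma~\ref{l-dim-1}), pass to a birational model carrying a boundary $\Gamma_Y$ with coefficients in the fixed finite set $\{1-\tfrac{\epsilon}{2},1\}$ so that $L=K_Y+\Gamma_Y$ is semi-ample$/Z$, bound $(L|_H)^{d-1}$ on a general fibre from above by Lemma~\ref{l-bnd-volume} and $(L|_{T_i})^{d-1}$ on the big special components from below by the HMX volume theorem, and compare via Lemma~\ref{l-intersection}. The paper deals with precisely the obstacle you flag --- components $T_i$ with $(K_Y+\Gamma_Y)|_{T_i}$ not big, for which no bound on $m_i$ results --- by a further LMMP that contracts them (its Step~6), and then in Step~7 translates the multiplicity bound on the new model $Y''$ back to an lc-threshold bound on $X$ via the inequality $B_{Y''}+\epsilon\rddown{\Delta_{Y''}}\le \Delta_{Y''}$; note also that your exponent should be $d-1$, not $d-2$.
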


By taking hyperplane sections on $Z$ we reduce the problem to the case $\dim Z=1$. 

\begin{lem}\label{l-dim-1}
Assume that Proposition \ref{p-2} holds for the data $d-1,\epsilon, \mathcal{P}$. 
Then the proposition holds for the data $d,\epsilon, \mathcal{P}$ for those $(X,B)$ and $f\colon X\to Z$ with 
$\dim Z>1$. 
\end{lem}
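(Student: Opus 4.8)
The goal is to reduce the bound on the coefficients of $B_Z$ from $\dim Z = n > 1$ to $\dim Z = 1$ by cutting $Z$ with general hyperplane sections. The key point to exploit is that the discriminant divisor behaves well under restriction to a general member of a very ample linear system, and that the hypotheses of the conjecture (dimension, $\epsilon$-lc, the equivalence $K_X+B\sim_\R 0$ over the base, Fano-type general fibres, and membership of the log general fibre in $\mathcal P$ up to codimension-one isomorphism) are all preserved.

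First I would fix a prime divisor $D$ on $Z$ with coefficient $1-t$ in $B_Z$, where $t$ is the lc threshold of $f^*D$ with respect to $(X,B)$ over the generic point of $D$. Since $\dim Z \geq 2$, I can choose a general hyperplane section $H\subset Z$ (with respect to some very ample divisor) meeting $D$ properly, and set $G = f^*H$, $X_H = G$, $B_H = B|_{X_H}$, $f_H\colon X_H\to H$ the restriction. By Bertini and general position, $X_H$ and $H$ are normal, $(X_H, B_H)$ is $\epsilon$-lc of dimension $d-1$ with $K_{X_H}+B_H = (K_X+B)|_{X_H} \sim_\R 0/H$, the general fibres of $f_H$ coincide with those of $f$ (hence are of Fano type with log general fibre isomorphic in codimension one to the same member of $\mathcal P$), so the data $(X_H,B_H)$, $f_H$ satisfies the hypotheses of Conjecture $S_{d-1,\epsilon,\mathcal P}$. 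The crucial compatibility is that, because $H$ is general, the lc threshold of $f_H^*(D\cap H) = (f^*D)|_{X_H}$ with respect to $(X_H,B_H)$ over the generic point of $D\cap H$ equals $t$: restriction to a general hyperplane does not change log discrepancies of divisors whose centre meets the section, so the coefficient of the prime divisor $D\cap H$ in $B_H$ (the discriminant of the restricted fibration) equals $1-t$ as well.

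Then, by the assumed validity of Proposition \ref{p-2} in dimension $d-1$, there is $\delta>0$ depending only on $d-1,\epsilon,\mathcal P$ (hence on $d,\epsilon,\mathcal P$) with every coefficient of $B_H$ at most $1-\delta$; in particular $1-t\le 1-\delta$, i.e. $t\ge\delta$. Since $D$ was arbitrary, every coefficient of $B_Z$ is $\le 1-\delta$, which is exactly the assertion of Proposition \ref{p-2} for the data $d,\epsilon,\mathcal P$ with $\dim Z>1$.

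The main obstacle is the second step: verifying that the lc threshold, and hence the discriminant coefficient along $D$, is genuinely unchanged after restricting to a general hyperplane section. One must be careful that the relevant log canonical place of $(X, B + t f^*D)$ over the generic point of $D$ survives the restriction and that no new (worse) log canonical places are created, which requires choosing $H$ avoiding a suitable closed subset coming from a log resolution computing the threshold. Inversion of adjunction (or a direct resolution argument using that a general hyperplane section of a log resolution is again a log resolution of the restricted pair) handles this, but it is the technical heart of the lemma; the rest is bookkeeping of which hypotheses pass to the section.
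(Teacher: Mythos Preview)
Your overall strategy is the same as the paper's: cut $Z$ by a general very ample divisor $H$, restrict to $G=f^*H$, and apply the $(d-1)$-dimensional case to the induced fibration $g\colon G\to H$. The paper also identifies the preservation of the lc threshold along $D$ under this restriction as the crux, so your diagnosis of the main obstacle is correct.

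The difference is in how this crux is handled. You propose to fix a log resolution $\phi\colon W\to X$ of $(X,B+f^*D)$, pick a divisor $E$ on $W$ with $a(E,X,B+tf^*D)=0$ whose centre dominates $D$, and then choose $H$ general so that $G_W:=\phi^*G$ is a log resolution of $(G,B_G+g^*D_H)$ and $E\cap G_W$ inherits log discrepancy $0$ with centre dominating $D_H$. This works and is elementary. The paper instead argues on $X$: it observes that $G$ and any lc centre $V$ of $(X,B+tf^*D)$ are both lc centres of $(X,B+G+tf^*D)$, and invokes Ambro's theorem that the intersection of lc centres is a union of lc centres to conclude that $V\cap G$ contains an lc centre of $(G,B_G+tg^*D_H)$ dominating $D_H$. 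Your approach avoids the external reference; the paper's approach avoids tracking a particular resolution and makes the geometry of lc centres do the work.

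Two small points you should tighten. First, $D\cap H$ need not be irreducible; the paper deals with this by shrinking $Z$ around $D$ so that $D_H:=D\cap H$ becomes a single prime divisor on $H$. Second, you implicitly use both inequalities $t'\le t$ and $t'\ge t$ (where $t'$ is the threshold on $H$): the inequality $t'\ge t$, i.e.\ that $(G,B_G+tg^*D_H)$ is lc over the generic point of $D_H$, also requires the generality of $H$ (so that no new lc places appear), and deserves a sentence. With these adjustments your argument is complete and parallel to the paper's.
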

\begin{proof}
Assume that $\delta>0$ is as in the proposition for the data $d-1,\epsilon, \mathcal{P}$.
Let $(X,B)$ and $f\colon X\to Z$ be as in the proposition for the data $d,\epsilon, \mathcal{P}$, 
and assume that $\dim Z>1$. 
Let $D$ be a component of $B_Z$, 
and $t$ the lc threshold of $f^*D$ over the generic point of $D$ with respect to the pair $(X,B)$. 
We will show that $t\ge \delta$ which means that the coefficient of $D$ in $B_Z$ is at most $1-\delta$. 
By removing some codimension $2$ closed subset of $Z$ we may assume that $Z$ is smooth.  
Moreover, we can assume that $(X,B+tf^*D)$ is lc whose lc centres all map onto $D$. 
By definition, $(X,B+tf^*D)$ has at least one lc centre.
 
Pick a general hyperplane section $H\subset Z$ (which would intersect $D$) and let $G=f^*H$.  
This ensures that  $(X,B+G+tf^*D)$ is lc and that $(X,B+G)$ is $\epsilon$-lc in codimension $\ge 2$, that is, 
$a(P,X,B+G)\ge \epsilon$ for any prime divisor $P$ exceptional$/X$. 
Letting 
$$
K_G+B_G=(K_X+B+G)|_G
$$ 
we get a klt pair $(G,B_G)$ and a contraction 
$g\colon G\to H$ which satisfy properties of the proposition for the data $d-1,\epsilon, \mathcal{P}$: 
indeed, $(G,B_G)$ is $\epsilon$-lc of dimension $d-1$, $K_G+B_G\sim_\R 0/H$, and the log general fibres of $g$ are among 
the log general fibres of $f$. 

By further shrinking $Z$ around $D$ 
we can assume that $D_H:=D\cap H$ is irreducible.
By construction,  
$(G,B_G+ tg^*D_H)$ is lc. Moreover, if $V$ is a lc centre of $(X,B+tf^*D)$ then $V\cap G\neq \emptyset$ 
because $V$ is mapped onto $D$ and $G$ contains every fibre over points of $D_H\subset D$. 
Since $V$ and $G$ are both lc centres of $(X,B+G+tf^*D)$, by Ambro [\ref{Ambro2}, Theorem 1.1]
the intersection $V\cap G$ is a union of lc centres of 
 $(X,B+G+tf^*D)$. Each lc centre of $(X,B+G+tf^*D)$ which sits inside $G$ is also a lc 
 centre of $(G,B_G+ tg^*D_H)$. Thus, $V\cap G$ is a union of lc centres of $(G,B_G+ tg^*D_H)$.
As $V\cap G$ maps onto $D_H$, there is a lc centre $W$ of $(G,B_G+ tg^*D_H)$ which maps onto $D_H$. 
In particular, this means that $t$ is the lc threshold of $g^*D_H$ with respect to the 
pair $(G,B_G)$ over the generic point of $D_H$.
By assumptions,  $t\ge \delta$. 
Therefore, the coefficient of $D$ in $B_Z$ is at most $1-\delta$ and we are done.      
\end{proof}

\begin{proof}(of Proposition \ref{p-2}) 
\emph{Step 1.}
Let $d,\epsilon,\mathcal{P}$ be as in the proposition where $\mathcal{P}$ is a bounded family by assumption. 
We will apply induction so we can assume that the proposition holds for the data 
$d',\epsilon,  \mathcal{P}$ if $d'<d$. Let $(X,B)$ and $f\colon X\to Z$ be as in the 
proposition for the data $d,\epsilon,\mathcal{P}$. 
By Lemma \ref{l-dim-1}, we may assume that $\dim Z=1$. By taking a $\Q$-factorialisation and applying Lemma 
\ref{l-Fano-type} we 
may assume that $X$ is $\Q$-factorial.

Fix a closed point $D\in Z$. We will find a real number $\delta>0$ depending only on 
$d,\epsilon, \mathcal{P}$ such that the coefficient of $D$ in $B_Z$ is $\le 1-\delta$.
We can shrink $Z$ around $D$ if necessary. In particular, we can assume that 
each component of $B$ is either horizontal$/Z$ or mapped to $D$. 

\emph{Step 2.} 
By decreasing $\epsilon$ if necessary we may assume that $\epsilon$ is rational.
Put $b=1-\frac{\epsilon}{2}$. Since $(X,B)$ is $\epsilon$-lc, 
and since for any prime divisor $P$ on $X$ we have $\epsilon\le a(P,X,B)\le 1$, $b> 0$.
Let $\phi\colon W\to X$ be a log resolution of $(X,B+f^*D)$. Let $\{M_i\}$ be the 
set of components of $\phi^*f^*D$, let $\{M_j'\}$ be the set of prime exceptional divisors of $\phi$ 
which do not belong to $\{M_i\}$, and let $\{M_k''\}$ be the set of components of 
the birational transform of $B$ which do not belong to $\{M_i\}$. 
Define 
$$
\Delta_W=\sum M_i+\sum bM_j'+\sum b_kM_k''
$$ 
where $b_k$ is the coefficient of $M_k''$ in the birational transform of $B$. 
Define 
$$
\Gamma_W=\sum M_i+\sum bM_j'+\sum bM_k''
$$

By construction, $(W,\Delta_W)$ and $(W,\Gamma_W)$ are both dlt, $\Supp \Gamma_W=\Supp \Delta_W$,  
$\Gamma_W-\Delta_W\ge 0$,
$$
\rddown{\Gamma_W}=\rddown{\Delta_W}=\Supp \phi^*f^*D,
$$ 
 each component of 
$\Gamma_W-\Delta_W$ is a component of the birational transform of $B$ and 
horizontal$/Z$, and  
$\Gamma_W-\Delta_W$ and $\rddown{\Gamma_W}=\rddown{\Delta_W}$ have no common components.
Also, note that the coefficients of $\Gamma_W$ belong to the set $\{b,1\}$.

Write $K_W+B_W=\phi^*(K_X+B)$. Since $(X,B)$ is $\epsilon$-lc, each coefficient of $B_W$ 
is at most $1-\epsilon$. If $Q$ is a component of $B_W$ with positive coefficient, then 
$Q$ is exceptional$/X$ or a component of the birational transform of $B$. In either case 
$Q$ is a component of $\Gamma_W$. 
So from $1-\epsilon<b$ we get $\Gamma_W-B_W\ge 0$ and that $\Supp \phi^*B\subseteq \Supp (\Gamma_W-B_W)$. 
Thus,  
 $B_W+r\phi^*B\le \Gamma_W$ if 
$r>0$ is sufficiently small. This implies that 
$$
\phi^*(K_X+B+rB)\le K_W+\Gamma_W
$$ 
hence $K_W+\Gamma_W$ is big$/Z$ because $K_X+B+rB$ is big$/Z$ 
which in turn follows from the assumptions that $K_X+B\sim_\R 0/Z$ and that 
the general fibres of $f$ are of Fano type (see \ref{s-Fano-type}). 

\emph{Step 3.}  
Let $\Gamma=\phi_*\Gamma_W$. We can write 
$$
K_W+\Gamma_W=\phi^*(K_X+\Gamma)+E_W
$$ 
where  $E_W$ is exceptional$/X$. Run the LMMP$/X$ on $K_W+\Gamma_W$ with scaling of some 
ample divisor. By [\ref{B-lc-flips}, Theorem 3.5], after finitely many steps we get a 
model $V$ on which $E_V\le 0$. Let $g$ be the morphism $V\to Z$. Let $(G,\Gamma_G)$ be a 
log general fibre of $(V,\Gamma_V)$ and $g$, and let $(F,\Gamma_F)$ be the corresponding log 
general fibre of $(X,\Gamma)$ and $f$. Then, 
$$
K_G+\Gamma_G=\psi^*(K_F+\Gamma_F)+E_G
$$
where $\psi$ is the morphism $G\to F$ and $E_G:=E_V|_G\le 0$.  

By construction, $\Supp \Gamma=\Supp B$ over the generic point of $Z$ hence $\Supp \Gamma_F=\Supp B_F$. 
By assumptions, the couple $(F,\Supp \Gamma_{F})=(F,\Supp B_{F})$ is isomorphic in codimension one 
with some couple $(F',D_{F'})\in \mathcal{P}$. By Lemma \ref{l-bnd-volume}, the volume of $D_{F'}$ 
is bounded by a number $v$ depending only on $\mathcal{P}$. Thus, the volume of $L_F:=\Supp \Gamma_{F}$ is also bounded by $v$ because 
$\vol(L_F)=\vol(D_{F'})$.
On the other hand, since  $K_X+B\sim_\R 0/Z$, there is a rational boundary $B'$ with the 
same support as $B$ such that 
 $K_X+B'\sim_\Q 0/Z$ (see the approximation arguments in the proof of Theorem \ref{t-main} below) 
 from which we get 
$$
K_{F}+\Gamma_F\le K_{F}+L_F =K_F+B_F'+L_F-B_F'\sim_\Q L_F-B_F'\le L_F
$$
hence the volume of $K_{F}+\Gamma_F$ is bounded by $v$. Since $E_G\le 0$, the volume of $K_G+\Gamma_G$ 
is also bounded by $v$.

\emph{Step 4.}
By construction, $\Supp \rddown{\Gamma_V}=\Supp g^*D$. Since $g^*D\sim 0/Z$, we can write 
$$
K_V+\Gamma_V\sim_\Q K_V+C_V/Z
$$ 
where $C_V\le \Gamma_V$, $(V,C_V)$ is klt and $K_V+C_V$ is big$/Z$. So, if we run the LMMP$/Z$ 
on $K_V+\Gamma_V$ with scaling of some ample divisor, then it terminates  
with a model $Y$ on which $K_Y+\Gamma_Y$ is semi-ample$/Z$. Let $h$ denote 
the morphism $Y\to Z$.

Let $(H,\Gamma_H)$ be a log general fibre of $(Y,\Gamma_Y)$ and $h$, and let $(G,\Gamma_G)$ be
 the corresponding log general fibre of $(V,\Gamma_V)$ and $g$. 
 By Step 3, $\vol(K_G+\Gamma_G)$ is bounded. Since 
$(Y,\Gamma_Y)$ is a log minimal model of $(V,\Gamma_V)$ over $Z$, 
$$
\vol(K_H+\Gamma_H)=\vol(K_G+\Gamma_G)
$$ 
which means that $\vol(K_H+\Gamma_H)$ is bounded by $v$ as well.

\emph{Step 5.}
Let $Y\to Y'$ be the contraction$/Z$ defined by $K_Y+\Gamma_Y$. Since  $K_Y+\Gamma_Y$ is big$/Z$, 
$Y\to Y'$ is birational. 
Let $T$ be a component of $\Supp \rddown{\Gamma_Y}=\Supp h^*D$ that is not contracted over $Y'$, that is, 
$$
K_T+\Gamma_T:=(K_Y+\Gamma_Y)|_T
$$
is big. Since the coefficients of $\Gamma_Y$ belong to the fixed finite set $\{b,1\}$, 
the coefficients of $\Gamma_T$ belong to a DCC set $\Omega$ which only depend on $b$ and $d$ [\ref{log-flips}, Corollary 3.10]. 
By  [\ref{HMX}, Theorem 1.3], 
there is a real number $\theta>0$ depending only on $d$ and $\Omega$ such that 
$\vol(K_T+\Gamma_T)\ge \theta$. 

By Step 4, we can assume that $\vol(K_H+\Gamma_H)\le v$ for every fibre $H$ of $h$ other than $h^*D$.  
Now, write $h^*D=\sum m_iT_i$ where $T_i$ are the irreducible components 
of $h^*D$.  By Lemma \ref{l-intersection}, we get the following 
equalities of intersection numbers 
\begin{equation*}
\begin{split}
\vol(K_H+\Gamma_H) & =(K_H+\Gamma_H)^{d-1}\\
 & =((K_Y+\Gamma_Y)|_{H})^{d-1}\\
 & =\sum m_i((K_Y+\Gamma_Y)|_{T_i})^{d-1}\\
 & =\sum m_i \vol(K_{T_i}+\Gamma_{T_i})\\
\end{split}
\end{equation*}

 Therefore if $K_{T_i}+\Gamma_{T_i}$ is big for some $i$, then 
$$
\vol(K_H+\Gamma_H)\ge m_i \vol(K_{T_i}+\Gamma_{T_i})\ge m_i\theta
$$
which implies that such $m_i$ are bounded by $\frac{v}{\theta}$. However, we do not get any bound for $m_j$  
if $K_{T_j}+\Gamma_{T_j}$ is not big. We will try to get rid of such $T_j$.

\emph{Step 6.} 
Run the 
LMMP$/Y'$ on $K_Y+\Delta_Y$ with scaling of $P_Y:=\Gamma_Y-\Delta_Y$. Note that $K_Y+\Gamma_Y$ is numerically 
trivial on each extremal ray contracted in the process since $K_Y+\Gamma_Y\equiv 0/Y'$. The LMMP terminates for reasons similar to Step 4.
In some step of the LMMP, we arrive on a model on which the 
birational transform of $K_Y+\Gamma_Y-rP_Y$ is semi-ample$/Y'$ where $r>0$ is a small 
number. Replace $Y$ with that model. Since $K_Y+\Gamma_Y$ is semi-ample$/Z$, 
$K_Y+\Gamma_Y-rP_Y$ is also semi-ample$/Z$ if we take $r$ to be sufficiently small.

Let $Y\to Y''$ be the contraction$/Z$ defined by $K_Y+\Gamma_Y-rP_Y$. Since $r$ is 
 sufficiently small, the map $Y''\bir Y'$ is actually a morphism.   
On the other hand, since,  by Step 2, $P_Y$ and $\rddown{\Gamma_Y}$ have no common components, any component of 
$\rddown{\Gamma_Y}$ that is contracted over $Y'$ is also contracted over $Y''$: indeed if 
$T$ is a component of $\rddown{\Gamma_Y}$ that is contracted over $Y'$, then $(K_Y+\Gamma_Y)|_T$ 
is not big hence $(K_Y+\Gamma_Y-rP_Y)|_T$ is also not big; so $T$ should be contracted over $Y''$ 
as well. Thus, by Step 5, the coefficient of any component of  $h''^*D$ is bounded by $\frac{v}{\theta}$
where $h''$ is the morphism $Y''\to Z$. 

\emph{Step 7.} 
Since  $K_{Y''}+\Gamma_{Y''}$ 
and $K_{Y''}+\Gamma_{Y''}-rP_{Y''}$ are both $\R$-Cartier, $P_{Y''}$ 
is also $\R$-Cartier which in turn implies that $K_{Y''}+\Delta_{Y''}$ is $\R$-Cartier as well. 
In particular, $K_{Y''}+\Delta_{Y''}$ is lc.
Let $\pi\colon N\to X$ and $\mu\colon N\to Y''$ 
be a common resolution. Since  $(X,B)$ is $\epsilon$-lc, we can write 
$$
K_{Y''}+B_{Y''}:=\mu_*\pi^*(K_X+B)
$$
where each coefficient of $B_{Y''}$ is $\le 1-\epsilon$. On the other hand, 
if we write 
$$
K_{W}+B_{W}=\phi^*(K_X+B)
$$
as in Step 2 then $B_W+\epsilon \rddown{\Delta_{W}} \le \Delta_{W}$ which implies that 
$B_{Y''}+\epsilon \rddown{\Delta_{Y''}} \le \Delta_{Y''}$ hence
$$
K_{Y''}+B_{Y''}+ \epsilon \rddown{\Delta_{Y''}}\le K_{Y''}+\Delta_{Y''}
$$ 
Moreover, $\rddown{\Delta_{Y''}}=\Supp h''^*D$.

By Steps 5 and 6, we can write $h''^*D=\sum m_iT_i$ where the $m_i$ are all bounded by $\frac{v}{\theta}$. 
This implies that there is a real number $\delta>0$ depending only on $d,\epsilon,\mathcal{P}$ such that 
$$
K_{Y''}+B_{Y''}+\delta h''^*D\le K_{Y''}+\Delta_{Y''}
$$ 
Therefore, $K_{Y''}+B_{Y''}+\delta h''^*D$ is lc which in turn implies that $K_X+B+\delta f^*D$ is lc because 
$$
\pi^*(K_{X}+B+\delta f^*D)=\mu^*(K_{Y''}+B_{Y''}+\delta h''^*D)
$$ 
where we use the fact that $K_X+B\sim_\R 0/Z$.
Note however that $B_{Y''}+\delta h''^*D$ is not necessarily effective but it is a sub-boundary.
Finally, if $t$ is the lc threshold of $f^*D$ with respect to $(X,B)$, then we have $t\ge \delta$. 
In other words, the coefficient of $D$ in $B_Z$ is at most $1-\delta$ and this completes the proof of the 
proposition. 
\end{proof}

Next we bound the coefficients of the discriminant b-divisor $\mathcal{B}_Z$ (see \ref{s-adjunction} 
for definitions). But first we need a couple of lemmas. 

\begin{lem}\label{l-compact}
Assume that $\epsilon>\epsilon'>0$. Let $(X,B)$ be a $\Q$-factorial $\epsilon$-lc pair and $f\colon X\to Z$ 
a contraction such that $K_X+B\sim_\R 0/Z$ and $B$ is big$/Z$. Then, there are normal  projective varieties 
${X}'\supseteq X$ and ${Z}'\supseteq Z$ and a contraction 
${f}'\colon {X}'\to {Z}'$ such that

$\bullet$ $({X}',{B}')$ is $\Q$-factorial $\epsilon'$-lc and $K_{{X}'}+{{B}'}\sim_\R 0/{Z}'$,

$\bullet$ $(K_{{X}'}+{{B}'})|_X=K_X+B$, and ${f}'|_X=f$.
\end{lem}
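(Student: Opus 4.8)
\emph{Proposal of proof.} The plan is the standard ``compactify and run a relative MMP'' argument, with the bigness of $B$ (equivalently of $-K_X$) used to control the fibration over $Z'\setminus Z$. I will first get some compactified contraction $\bar f\colon\bar X\to\bar Z$ of the data, then build from it a $\Q$-factorial dlt model whose new divisors over the boundary get coefficient $1-\epsilon'$, and finally run relative MMPs to recover $X$ as an open subset and to re-trivialise $K+B$ over the base.

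\emph{Step 1 (compactifying the morphism).} First fix a normal projective $\bar Z\supseteq Z$. Embedding $X$ in a projective variety, taking the closure of the graph of $f$ in the product with $\bar Z$, resolving the indeterminacy of the second projection over the part lying above $\bar X\setminus X$ (an isomorphism over $X$, since $f$ is already a morphism there), normalising, and replacing $\bar Z$ by the normalisation appearing in the Stein factorisation, we obtain a contraction $\bar f\colon\bar X\to\bar Z$ of normal projective varieties with $X\subseteq\bar X$, $Z\subseteq\bar Z$ open and $\bar f|_X=f$. Since $f$ is proper and $X$ is an open subset of the connected scheme $\bar f^{-1}(Z)$, which is itself proper over $Z$, in fact $X=\bar f^{-1}(Z)$, so $\bar X\setminus X=\bar f^{-1}(\bar Z\setminus Z)$ is vertical over $\bar Z$. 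I will also arrange that the divisor $N$ with $K_X+B\sim_\R f^*N$ extends to an $\R$-Cartier divisor on $\bar Z$ (e.g.\ after replacing $\bar Z$ by a $\Q$-factorialisation); this is automatic in the main case of interest, $\dim Z=1$.

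\emph{Step 2 (a $\Q$-factorial dlt model).} Let $\bar B$ be the closure of $B$ in $\bar X$, so $\bar B|_X=B$. Take a log resolution $\pi\colon W\to\bar X$ of $(\bar X,\bar B+\Supp\bar f^*(\bar Z\setminus Z))$ and define a boundary $B_W$ on $W$ as follows: over $\pi^{-1}(X)$ take the boundary determined by $K_W+B_W=\pi^*(K_X+B)$ (all its coefficients are $\le 1-\epsilon<1-\epsilon'$ because $(X,B)$ is $\epsilon$-lc); over $\bar X\setminus X$ keep the coefficients of the birational transform of $\bar B$ and assign the coefficient $1-\epsilon'$ to every other exceptional prime of $\pi$ and to every component of $\Supp\pi^*\bar f^*(\bar Z\setminus Z)$. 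Then $(W,B_W)$ is log smooth, hence $\Q$-factorial dlt and $\epsilon'$-lc, and one can arrange $B_W\ge 0$ (extracting over $X$ only divisors of log discrepancy $\le 1$, the only ones that might otherwise be forced negative). Crucially $B_W$ is big over $\bar Z$, because its restriction to a general fibre $F$ of $\bar f$ is $B_F$, which is big since $F$ is of Fano type and $K_F+B_F\sim_\R 0$.

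\emph{Step 3 (two relative MMPs).} Run an MMP$/\bar X$ on $K_W+B_W$ with scaling; it terminates (e.g.\ by [\ref{B-lc-flips}, Theorem 3.5]), and since over $X$ the pair $(W,B_W)$ is crepant to $(X,B)$ and hence already relatively minimal, passing to the relative log canonical model over $\bar X$ and then to a small $\Q$-factorialisation (an isomorphism over the $\Q$-factorial open set $X$) we reach a $\Q$-factorial $\epsilon'$-lc pair $(\bar X_1,B_1)$ with $\bar X_1\supseteq X$ open, $B_1|_X=B$, and induced contraction to $\bar Z$ restricting to $f$ over $Z$. Next run an MMP$/\bar Z$ on $K_{\bar X_1}+B_1$ with scaling; as $B_1$ is big$/\bar Z$ it terminates by [\ref{BCHM}] with a log minimal model over $\bar Z$ (it cannot be a Mori fibre space, since $K_X+B\sim_\R 0/Z$ produces no negative extremal ray over the generic point of $\bar Z$), and for the same reason it is an isomorphism over $Z$; call the outcome $(X',B')$, so $K_{X'}+B'$ is nef$/\bar Z$. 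Writing $K_{X'}+B'\sim_\R f'^*\bar N+G$ with $G$ vertical over $\bar Z$ and, after adjusting $\bar N$ by pullbacks from $\bar Z$, with a zero coefficient along some component of every fibre over $\bar Z\setminus Z$, the usual argument using nefness$/\bar Z$ and connectedness of fibres forces $G=0$; hence $K_{X'}+B'\sim_\R f'^*\bar N\sim_\R 0/Z'$ with $Z':=\bar Z$. Since MMP steps do not decrease log discrepancies, $(X',B')$ is still $\Q$-factorial $\epsilon'$-lc, and by construction $X'\supseteq X$, $(K_{X'}+B')|_X=K_X+B$, and $f'|_X=f$. For $\R$-coefficients one first replaces $B$ by a nearby $\Q$-boundary $B^{\sharp}$ with the same support and $K_X+B^{\sharp}\sim_\Q 0/Z$, carries out the above, and then perturbs back.

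\emph{Main obstacle.} The delicate point is Step 3: ensuring the relative MMP over $\bar Z$ terminates and, above all, upgrading the conclusion from ``$K_{X'}+B'$ nef$/\bar Z$'' to ``$K_{X'}+B'\sim_\R 0/Z'$''; this is exactly where the relative bigness of $B$, i.e.\ the Fano type of the general fibre, is indispensable, both to invoke [\ref{BCHM}] and to run the vertical-divisor argument. Secondary nuisances are the bookkeeping needed to keep $X$ open and the $\epsilon'$-lc bound intact through all the modifications, the reduction to $\Q$-coefficients, and guaranteeing (for general $Z$) that $\bar Z$ may be chosen so that $N$ extends to an $\R$-Cartier divisor.
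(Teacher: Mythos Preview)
Your overall strategy---compactify, take a log resolution with exceptional coefficients $1-\epsilon'$, run an MMP over $\bar X$ to recover $X$ as an open subset, then run an MMP over $\bar Z$---is exactly the paper's. But your execution diverges from the paper's in two places, and in both the paper's route is both simpler and avoids the issues you flag.

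\textbf{The boundary $B_W$.} You define $B_W$ as the crepant sub-boundary over $X$ and with coefficient $1-\epsilon'$ only over $\bar X\setminus X$; this forces you to worry about effectivity and to restrict which divisors you extract. The paper simply sets $B_W=\bar B^{\sim}+(1-\epsilon')E$ with $E$ the \emph{full} reduced exceptional divisor of a log resolution of $(\bar X,\bar B)$. This is manifestly an effective boundary with log smooth support, hence $(W,B_W)$ is $\epsilon'$-lc. Over $X$ one then has $K_W+B_W=\phi^*(K_X+B)+G$ with $G\ge 0$ effective and exceptional (using $\epsilon>\epsilon'$), so the LMMP$/\bar X$ on $K_W+B_W$ contracts $G$ and terminates on a model $V$ with $V\to\bar X$ an isomorphism over the $\Q$-factorial $X$. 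No crepant pullback, no effectivity problem, no need to control which divisors are extracted.

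\textbf{Getting $K_{X'}+B'\sim_\R 0/Z'$.} This is where your proposal has a genuine gap. You try to force $Z'=\bar Z$ and argue that the vertical ``error'' $G$ must vanish by a nefness/connectedness argument. That argument works when $\bar Z$ is a curve, but for higher-dimensional $\bar Z$ it is not clear how to adjust $\bar N$ to neutralise components of $G$ lying over codimension-$\ge 2$ loci of $\bar Z$, nor that such components must vanish. The paper sidesteps this entirely: since $B'$ is big$/\bar Z$ and $(X',B')$ is klt, BCHM gives $K_{X'}+B'$ \emph{semi-ample}$/\bar Z$, not merely nef. Let $f'\colon X'\to Z'/\bar Z$ be the contraction it defines; then $K_{X'}+B'\sim_\R 0/Z'$ by construction. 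Over $Z$ the divisor is already $\sim_\R 0$, so the contraction is trivial there, giving $Z\subseteq Z'$ and $f'|_X=f$. No vertical-divisor argument, no extension of $N$ to $\bar Z$, and no reduction to $\Q$-coefficients are needed.
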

\begin{proof}
We can compactify the morphism $f$ to a morphism $\overline{f}\colon \overline{X}\to \overline{Z}$ 
where $\overline{X}$ and  $\overline{Z}$ are normal projective varieties. 
Let $\phi\colon W\to \overline{X}$ be a log resolution and let 
$B_W=\overline{B}^{\sim}+(1-\epsilon')E$ where $E$ is the reduced exceptional divisor of $\phi$, 
$\overline{B}$ is the closure of $B$ in $\overline{X}$, and $\overline{B}^{\sim}$ is the birational 
transform of $\overline{B}$.

Run the LMMP$/\overline{X}$ on $K_W+B_W$ with scaling of some ample divisor. 
We get a model $V$ on which $K_{V}+B_V$ is nef$/\overline{X}$. Since $(X,B)$ is $\epsilon$-lc, over $X$,
 we can write 
$K_W+B_W\equiv G$ where $G$ is effective and exceptional. So, the LMMP contracts every component of $G$  
over $X$ and since $X$ is $\Q$-factorial, $V\to \overline{X}$ is an isomorphism over $X$.
Now, run the LMMP$/\overline{Z}$ on $K_V+B_V$ with scaling of some ample divisor. 
Since $B_V$ is big$/\overline{Z}$, 
the LMMP terminates with a model $X'$ on which $K_{X'}+B'$ is semi-ample$/\overline{Z}$. 

Let 
${f}'\colon X'\to Z'/\overline{Z}$ be the contraction defined by $K_{X'}+B'$. 
Since $V\to \overline{X}$ is an isomorphism over $X$ and $K_X+B\sim_\R 0/Z$, 
the map $V\bir X'$ is an isomorphism over $Z$ hence  the morphism 
$X'\to Z'$ coincides with $X\to Z$ over $Z$. 
By construction, $(X',B')$ and $f'$ satisfy all the properties of the lemma. 
\end{proof}

\begin{lem}\label{l-klt}
Let $(X,B)$ be a klt pair and $f\colon X\to Z$ 
a contraction such that $K_X+B\sim_\R 0/Z$ and $B$ is big$/Z$. 
Assume that $D$ is a component of the discriminant b-divisor $\mathcal{B}_{Z}$ with positive coefficient. 
Then, there is an extremal contraction $Z''\to Z$ which extracts $D$. 
\end{lem}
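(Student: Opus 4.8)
The plan is to reinterpret the hypothesis as a statement about log discrepancies over $Z$, and then to realise $D$ by the standard ``extraction of a single divisor'' through the minimal model program.

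First I would reduce to a convenient situation. Replacing $X$ by a $\Q$-factorialisation — a small birational morphism, under which klt-ness, bigness of $B/Z$, and the whole adjunction package, in particular $\mathcal B_Z$, are unchanged — I may assume $X$ is $\Q$-factorial, and by Lemma~\ref{l-compact} I may assume in addition that $X$ and $Z$ are projective. If $D$ is already a prime divisor on $Z$ we take $Z''=Z$, so from now on $D$ is exceptional over $Z$. Fix a sufficiently high model $\sigma\colon Z'\to Z$ (an Ambro model, log smooth, with $D$ a prime divisor on it) together with $\tau\colon X'\to X$ and $f'\colon X'\to Z'$ as in \ref{s-adjunction}. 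By definition, $\operatorname{coeff}_D\mathcal B_Z=\operatorname{coeff}_D B_{Z'}=1-t$, where $t$ is the lc threshold of $f'^{*}D$ over the generic point of $D$ with respect to $(X',B')$; so the hypothesis says precisely that $t<1$.

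The heart of the matter is to produce a \emph{klt} pair on $Z$ at which $D$ has log discrepancy $<1$, and my claim is that $(Z,B_Z+M_Z)$ does the job for a suitable choice of the representative $M_Z\ge 0$ of the moduli part. Three ingredients enter. First, $B_Z\ge 0$: for any prime divisor $D''\subset Z$ the divisor $f^{*}D''$ has components of multiplicity $\ge 1$ while the coefficients of $B$ are $<1$, so the lc threshold of $f^{*}D''$ over the generic point of $D''$ is $\le 1$, i.e. $\operatorname{coeff}_{D''}B_Z\ge 0$. Second, $M_Z$ may be taken $\ge 0$: by Ambro [\ref{am05}] — applicable after the above compactification and after replacing $B$ by a rational boundary with the same support, as in the proof of Theorem~\ref{t-main} — the moduli part $M_{Z'}$ on the Ambro model is the pullback of a nef and big divisor, hence $\R$-linearly equivalent to an effective one, which we choose as $M_{Z'}$ and set $M_Z=\sigma_*M_{Z'}$. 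Third, $(Z,B_Z+M_Z)$ is sub-klt, by the theory of the canonical bundle formula (since $(X,B)$ is klt and $K_X+B\sim_\R 0/Z$), hence klt because $B_Z+M_Z\ge 0$. Finally, $\sigma_*B_{Z'}=B_Z$ and $\sigma_*M_{Z'}=M_Z$, so the negativity lemma forces $K_{Z'}+B_{Z'}+M_{Z'}=\sigma^{*}(K_Z+B_Z+M_Z)$, whence
$$
a(D,Z,B_Z+M_Z)=1-\operatorname{coeff}_D(B_{Z'}+M_{Z'})=t-\operatorname{coeff}_DM_{Z'}\le t<1 .
$$

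With $(Z,\Delta_Z):=(Z,B_Z+M_Z)$ in hand, the extraction is routine. Enlarging $Z'$ if necessary so that $\sigma$ also resolves $(Z,\Delta_Z)$ (the choices above survive), let $\{E_j\}_{j}$ be the $\sigma$-exceptional prime divisors other than $D$, and set
$$
\Gamma_{Z'}=\Delta_{Z'}^{\sim}+\bigl(1-a(D,Z,\Delta_Z)\bigr)D+\sum_j(1-\eta)E_j,\qquad 0<\eta\ll 1,
$$
where $\Delta_{Z'}^{\sim}$ is the birational transform of $\Delta_Z$; by the previous paragraph all coefficients of $\Gamma_{Z'}$ lie in $[0,1)$, so $(Z',\Gamma_{Z'})$ is $\Q$-factorial klt (and trivially big$/Z$), and
$$
K_{Z'}+\Gamma_{Z'}\sim_\R\sigma^{*}(K_Z+\Delta_Z)+\sum_j\bigl(a(E_j,Z,\Delta_Z)-\eta\bigr)E_j ,
$$
whose exceptional part over $Z$ is effective and supported on $\bigcup_jE_j$. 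Running the LMMP$/Z$ on $K_{Z'}+\Gamma_{Z'}$ with scaling of a sufficiently ample$/Z$ divisor — legitimate by [\ref{BCHM}] — contracts exactly the $E_j$, and never $D$, since $D$ is not a component of the effective part being removed; it terminates with a $\Q$-factorial $Z''$ and a projective birational morphism $Z''\to Z$ whose unique exceptional divisor is $D$. As $Z''$ is $\Q$-factorial with a single exceptional divisor over $Z$, the negativity lemma gives $\rho(Z''/Z)=1$, so $Z''\to Z$ is the desired extremal contraction extracting $D$. The one genuinely delicate point is ingredients (ii) and (iii) of the bridge — arranging the moduli part to be effective while keeping the base pair klt — which rests on Ambro's positivity results together with the compactification of Lemma~\ref{l-compact} and the passage to a rational boundary; the reductions and the extraction are bookkeeping.
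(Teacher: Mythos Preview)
Your argument is essentially the paper's: compactify via Lemma~\ref{l-compact}, pass to a rational boundary, use Ambro's result to choose an effective $M_{Z'}$ so that $(Z,B_Z+M_Z)$ becomes a klt pair with $a(D,Z,B_Z+M_Z)<1$, and then extract $D$ by the standard BCHM procedure (which the paper cites in one line and you spell out).

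Two places are slightly imprecise and the paper handles them explicitly. First, once you ``replace $B$ by a rational boundary with the same support'' you are working with a new discriminant b-divisor $\mathcal C_Z$, and you must check that the coefficient of $D$ in $\mathcal C_Z$ is still positive; the paper secures this by taking $C$ sufficiently close to $B$ via Shokurov's polytopes. Second, ingredient~(iii) is not automatic for an arbitrary effective representative: sub-klt-ness of $(Z',B_{Z'}+M_{Z'})$ requires choosing $M_{Z'}$ general in its $\Q$-linear equivalence class (which is possible precisely because it is the pullback of a nef and big divisor), as the paper says. With these two clarifications your proof is complete and matches the paper's.
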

\begin{proof} 
By assumptions, there is a birational contraction $g\colon Z'\to Z$ such that $D$ is a component of $B_{Z'}$ 
with positive coefficient.
By taking a $\Q$-factorialisation we may assume that $X$ is $\Q$-factorial.
By Lemma \ref{l-compact}, we may assume that $X,Z$ are projective. 
Assume that $B$ has rational coefficients. We will argue as in [\ref{am05}]. 
We may assume that $Z'$ is an Ambro model and that  $(Z',B_{Z'})$ is log smooth. 
In particular, $M_{Z'}$ is the pullback of a nef and big divisor via some 
contraction $Z'\to T$. Since $(Z',B_{Z'})$ is a klt sub-pair we can pick $M_{Z'}$ 
so that $M_{Z'}\ge 0$ and that $(Z',B_{Z'}+M_{Z'})$ is again a klt sub-pair. 
Put  $M_Z=g_*M_{Z'}$. Then, $(Z,B_Z+M_Z)$ is a klt pair 
because 
$$
K_{Z'}+B_{Z'}+M_{Z'}=g^*(K_Z+B_Z+M_Z)
$$ 
Now, $a(D,Z,B_Z+M_Z)<1$ hence there is an extremal contraction $Z''\to Z$ which extracts $D$.

If $B$ is not a rational boundary, we can  
approximate $B$ with rational boundaries (cf. Fujino-Gongyo 
[\ref{Fujino-Gongyo-adjunction}, Theorem 3.1]). More precisely, using Shokurov's polytopes [\ref{log-models}],  
we can find a rational boundary $C$ sufficiently close to $B$ 
such that $(X,C)$ is klt, $K_X+C\sim_\R 0/Z$, $C$ is big$/Z$, 
and the coefficient of $D$ in the discriminant b-divisor $\mathcal{C}_{Z}$ is positive. Now apply the arguments above for the 
case of rational boundaries. 
\end{proof}

\begin{prop}\label{p-1}
Assume that $d,\epsilon, \mathcal{P}$ are as in Conjecture $S_{d,\epsilon, \mathcal{P}}$ and that 
$\mathcal{P}$ is a bounded family of couples. Then, there is a 
real number $\delta>0$ depending only on $d,\epsilon, \mathcal{P}$ such that if $(X,B)$ and $f\colon X\to Z$ 
are as in the conjecture, then every coefficient of $\mathcal{B}_Z$ is $\le 1-\delta$. 
\end{prop}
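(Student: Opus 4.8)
The plan is to reduce the statement for the b-divisor $\mathcal{B}_Z$ to Proposition \ref{p-2} by a base change. Let $D$ be a component of $\mathcal{B}_Z$ with positive coefficient; I will show that this coefficient is at most $1-\delta$, where $\delta>0$ is the number produced by Proposition \ref{p-2} for the data $d,\epsilon,\mathcal{P}$ (so $\delta$ depends only on $d,\epsilon,\mathcal{P}$). First I would make the usual reductions: replace $X$ by a $\Q$-factorialisation, which does not alter $\mathcal{B}_Z$ and keeps the general fibres of Fano type by Lemma \ref{l-Fano-type}. Since $D$ occurs in $\mathcal{B}_Z$ with positive coefficient, Lemma \ref{l-klt} provides an extremal contraction $\sigma\colon Z''\to Z$ which extracts $D$, so that $D$ is a genuine prime divisor on $Z''$; and by the very construction of the discriminant b-divisor (see \ref{s-adjunction}) the coefficient of $D$ in $\mathcal{B}_Z$ equals the coefficient of $D$ in the discriminant part $B_{Z''}$ of adjunction computed from \emph{any} crepant model of $(X,B)$ admitting a contraction onto $Z''$.

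The heart of the argument is to exhibit such a crepant model which moreover satisfies the hypotheses of Conjecture $S_{d,\epsilon,\mathcal{P}}$ over $Z''$. Take a resolution $\tau\colon W\to X$ through which the rational map $W\dashrightarrow Z''$ becomes a morphism $f_W\colon W\to Z''$, and write $K_W+B_W=\tau^*(K_X+B)$. Since $(X,B)$ is $\epsilon$-lc the coefficients of $B_W$ are at most $1-\epsilon$, but $B_W$ may fail to be effective along those $\tau$-exceptional prime divisors $E$ with $a(E,X,B)>1$; these form an effective $\tau$-exceptional divisor $N$, and the relation $K_W+(B_W+N)=\tau^*(K_X+B)+N$, with $B_W+N\ge 0$ and $N$ effective and exceptional over $X$, puts us in exactly the situation of Step 3 of the proof of Proposition \ref{p-2}. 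Running an LMMP with scaling on $K_W+(B_W+N)$ and invoking [\ref{B-lc-flips}, Theorem 3.5] contracts $N$, and this can be arranged to terminate on a model $X''$ still carrying a contraction $f''\colon X''\to Z''$ compatible with a morphism $\tau''\colon X''\to X$ and with $\sigma$; then $K_{X''}+B''=\tau''^*(K_X+B)$ with $B''\ge 0$. Thus $(X'',B'')$ is $\epsilon$-lc of dimension $d$ and $K_{X''}+B''\sim_\R 0/Z''$. Because every component of $N$ has centre of codimension at least two on $X$, it restricts to an exceptional divisor on a general fibre of $f$; hence a log general fibre of $(X'',B'')$ and $f''$ is isomorphic in codimension one with a log general fibre of $(X,B)$ and $f$, so it still lies, up to isomorphism in codimension one, in $\mathcal{P}$ and has Fano type underlying variety by Lemma \ref{l-Fano-type}. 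Consequently $(X'',B'')$ and $f''$ satisfy the hypotheses of Conjecture $S_{d,\epsilon,\mathcal{P}}$, and Proposition \ref{p-2} yields that the coefficient of $D$ in $B_{Z''}$ is at most $1-\delta$. Since $(X'',B'')$ is crepant to $(X,B)$ over $Z''$, this is precisely the coefficient of $D$ in $\mathcal{B}_Z$, which finishes the proof.

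The main obstacle is the construction in the previous paragraph: producing a crepant model over $Z''$ on which the boundary remains effective while it is still a genuine contraction onto $Z''$ with an unchanged log general fibre. The subtlety is that a crepant pull-back to a higher model develops negative coefficients exactly along the divisors of log discrepancy greater than one, and these must be removed by an LMMP without destroying the fibration over $Z''$; this is possible because such divisors have centre of codimension at least two on $X$ — in particular they do not affect the general fibre up to isomorphism in codimension one, and the ones genuinely needed to extend the map to $Z''$ sit over the codimension $\ge 2$ locus modified by $\sigma$, which is harmless here precisely because $D$ was assumed to have positive coefficient in $\mathcal{B}_Z$. Once this model is in hand, everything else is a direct appeal to Lemma \ref{l-klt} and Proposition \ref{p-2}.
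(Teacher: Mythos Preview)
Your overall strategy—extract $D$ on $Z''$ via Lemma~\ref{l-klt} and then invoke Proposition~\ref{p-2} for a fibration over $Z''$—is the same as the paper's. The gap is the construction in your middle paragraph: in general there is \emph{no} crepant model $(X'',B'')$ of $(X,B)$ with $B''\ge 0$ admitting a morphism to $Z''$. Take $(X,B)$ terminal (this is allowed; it is $\epsilon$-lc for any $\epsilon\le 1$). Then every $\tau$-exceptional divisor lies in $N$, so an LMMP on $K_W+B_W+N$ over $X$ contracts the entire exceptional locus and returns $X$; but $X\dashrightarrow Z''$ is typically not a morphism, so no $f''$ exists. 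Running the LMMP over $Z''$ instead preserves the map to $Z''$ but gives no mechanism to contract $N$, since $N$ need not be exceptional over $Z''$ (its components can dominate $D$), and you also lose the morphism to $X$. Your remark that ``$D$ has positive coefficient in $\mathcal{B}_Z$'' does not help: that is a statement about lc thresholds along the fibre over the generic point of $D$, and says nothing about the log discrepancies of the divisors one must extract to resolve $X\dashrightarrow Z''$. A secondary issue, even granting the model, is that you still need $X''\to Z''$ to be a \emph{contraction}, which does not come for free from an LMMP terminating over $Z''$.

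The paper circumvents this by abandoning crepancy. It fixes $\epsilon'\in(0,\epsilon)$, takes a log resolution $\phi\colon W\to X$ factoring through $Z''$, and sets $\Delta_W=B^\sim+(1-\epsilon')G$ with $G$ the reduced exceptional divisor, so $\Delta_W\ge 0$ automatically, $(W,\Delta_W)$ is $\epsilon'$-lc, and $K_W+\Delta_W\ge\phi^*(K_X+B)$. After LMMPs (first over the graph of $X\dashrightarrow Z''$, then over $Z''$) one reaches $X'$ with $K_{X'}+\Delta_{X'}$ semi-ample$/Z''$; the associated contraction $f'\colon X'\to Z'$, with $Z'\to Z''$ birational, is what Proposition~\ref{p-2} is applied to for the data $d,\epsilon',\mathcal{P}$. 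Since $(X',\Delta_{X'})$ is not crepant to $(X,B)$, one must then compare discriminants: the inequality $K_W+\Delta_W\ge\phi^*(K_X+B)$ pushes down to $X'$ and forces the coefficient of $D$ in the discriminant of $(X,B)$ on $Z'$ to be at most that of $(X',\Delta_{X'})$, giving the contradiction. This discriminant comparison is exactly the extra step that replaces your (unavailable) crepant model.
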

\begin{proof}
Pick $\epsilon'\in (0,\epsilon)$ and let $\delta\in(0,1)$ be as in Proposition \ref{p-2} for 
the data $d,\epsilon', \mathcal{P}$. 
Let $(X,B)$ and $f\colon X\to Z$ be as in Conjecture $S_{d,\epsilon, \mathcal{P}}$. 
By taking a $\Q$-factorialisation, we may assume that $X$ is $\Q$-factorial. This does not affect  $\mathcal{B}_Z$.
Assume that some component $E$ of $\mathcal{B}_Z$ has coefficient larger than $1-\delta$. 
 We will derive 
a contradiction. 

By our choice of $\delta$, 
every coefficient of ${B}_Z$ is $\le 1-\delta$, so $E$ is exceptional$/Z$. Since the general fibres of 
$f$ are of Fano type, $B$ is big$/Z$.
By  Lemma \ref{l-klt}, there is an extremal contraction $g\colon Z''\to Z$ 
such that $E$ is the only exceptional divisor of $g$. 
Now $B_{Z''}\ge 0$ and the coefficient of $E$ in $B_{Z''}$ is larger than $1-\delta$.
To get a contradiction with Proposition \ref{p-2} we need to construct a suitable 
fibration over $Z''$.
 
Let $\phi\colon W\to X$ be a log resolution so that the induced rational map $W\to Z''$ 
is a morphism. Let 
$$
\Delta_W:=B^\sim+(1-\epsilon')G 
$$
where $B^\sim$ is the birational transform of $B$ and $G$ is the reduced exceptional divisor of $\phi$. 
The pair $(W,\Delta_W)$ is $\epsilon'$-lc and we can write 
$$
K_W+\Delta_W=\phi^*(K_X+B)+C
$$ 
where $C\ge 0$ and $\Supp C=\Supp G$. In particular, if we run an LMMP on $K_W+\Delta_W$ 
over $X$ (or some open subset of $X$), then the LMMP terminates with $X$ (respectively with 
that open subset).

Let $T$ be the graph of the rational map $X\bir Z''$, that is, $T$ is the closure in $X\times Z''$ of the 
graph of $X_0\to Z''$ where $X_0\subseteq X$ is the domain of  $X\bir Z''$. Since $W$ maps to both $X$ and $Z''$ we 
get an induced morphism $W\to T$. 
Let
$U\subset Z$ be a non-empty open set over which $Z''\to Z$ is an isomorphism. 
Run an LMMP$/T$ on 
$K_W+\Delta_W$ with scaling of some ample divisor. We end up with a model $Y$ on which 
$K_{Y}+\Delta_{Y}$ is nef$/T$. Since $f^{-1}U\subseteq X_0$, the morphism $T\to X$ is an isomorphism 
over $U$. So, 
the morphism $Y\to T$ is also an isomorphism over $U$ by the last paragraph.
Since $K_X+B\sim_\R 0/Z$, $K_{Y}+\Delta_{Y}\sim_\R 0$ over $U$.  

Now, run an LMMP$/Z''$ on 
$K_Y+\Delta_Y$ with scaling of some ample divisor which ends up with a model $X'$ on which 
$K_{X'}+\Delta_{X'}$ is semi-ample$/Z''$ because $\Delta_Y$ is big$/Z''$. Since $K_{Y}+\Delta_{Y}$ is nef over $U$, 
the LMMP does not modify $Y$ over $U$, that is, $Y\bir X'$ is an isomorphism over 
$U$. 

Let $f'\colon X'\to Z'$ be the contraction$/Z''$ defined by 
$K_{X'}+\Delta_{X'}$. Then, the map $Z'\to Z''$ is birational. Moreover, 
$(X',\Delta_{X'})$ and $f'\colon X'\to Z'$ satisfy the assumptions of  
Proposition \ref{p-2} for the data $d,\epsilon', \mathcal{P}$, that is, $(X',\Delta_{X'})$ is $\epsilon'$-lc of dimension $d$, 
$K_{X'}+\Delta_{X'}\sim_\R 0/Z'$, and the log 
general fibres of $(X',\Delta_{X'})$ and $f'$ are the same as the log general fibres of $(X,B)$ and $f$.

Let $\Delta_{Z'}$ be the discriminant on $Z'$ associated to $K_{X'}+\Delta_{X'}$ 
and the fibration $f'$. By Proposition \ref{p-2}, the coefficients of $\Delta_{Z'}$ 
are at most $1-\delta$. 
On the other 
hand, let $B_{Z'}$ be the discriminant on $Z'$ associated to $K_X+B$ and the fibration $f$.  
It is enough to show that the coefficient of $E$ in $B_{Z'}$ is not bigger than the 
 coefficient of $E$ in $\Delta_{Z'}$. 

Let $\pi\colon V\to X$ and $\mu \colon V\to X'$
be a common resolution and let 
$$
M=K_{X'}+\Delta_{X'}-\mu_*\pi^*(K_X+B)
$$
As mentioned above we have 
$$
K_W+\Delta_W - \phi^*(K_X+B)=C\ge 0
$$
Now $M$ is just the pushdown of $K_W+\Delta_W - \phi^*(K_X+B)$ via the rational map $W\bir X'$. 
Therefore, $M\ge 0$. From  $K_X+B\sim_\R 0/Z$ we get 
$$
\pi^*(K_X+B)=\mu^*\mu_*\pi^*(K_X+B)
$$
and this combined with $M\ge 0$ results in 
$$
\mu^*(K_{X'}+\Delta_{X'})-\pi^*(K_X+B)
$$
$$
=\mu^*(K_{X'}+\Delta_{X'})-\mu^*\mu_*\pi^*(K_X+B)
= \mu^*M \ge 0
$$

Since the coefficient of $E$ in $\Delta_{Z'}$ is at most $1-\delta$, over the generic point of $E$ the log divisor   
$K_{X'}+\Delta_{X'}+\delta f'^*E$ is lc. This implies that  
$$
\mu^*(K_{X'}+\Delta_{X'}+\delta f'^*E)=\mu^*(K_{X'}+\Delta_{X'})+\mu^*\delta f'^*E
$$ 
is lc over the generic point of $E$ which in turn implies that 
$$
\pi^*(K_X+B)+\delta\mu^*f'^*E
$$
 is also lc over the generic point of $E$.
Therefore, the coefficient of $E$ in $B_{Z'}$ is at most $1-\delta$. This gives a 
contradiction. Thus, every coefficient of $\mathcal{B}_Z$ is at most $1-\delta$.
\end{proof}

\vspace{0.5cm}
\section{Proof of the main results}

\begin{proof}(of Theorem \ref{t-main}) 
Let $(X,B)$ and $f\colon X\to Z$ be as in Conjecture $S_{d,\epsilon,\mathcal{P}}$. 
If $f$ is birational, then $B_Z=f_*B$ and if we 
take $M_Z=0$, then $K_X+B=f^*(K_Z+B_Z)$ and $(Z,B_Z)$ is $\epsilon$-lc. So, 
$\delta=\epsilon$ works in this case. 
We can then assume that $f$ is not birational.
By taking a $\Q$-factorialisation we may assume that $X$ is $\Q$-factorial. 
The log general fibres may change but only by an isomorphism in codimension one. 
Pick $\epsilon'\in (0,\epsilon)$.
By Lemma \ref{l-compact}, we can replace $X,Z$ with projective varieties  
so that $(X,B)$ and $f\colon X\to Z$ satisfy the assumptions of Conjecture $S_{d,\epsilon',\mathcal{P}}$ 
(note that $\epsilon$ is replaced by $\epsilon'$).

First assume that the coefficients of $B$ are rational numbers. 
Let $\delta'>0$ be the number given by Proposition \ref{p-1} for the data 
$d,\epsilon', \mathcal{P}$, and pick  some $\delta\in (0,\delta')$.
Let $g\colon {Z}'\to Z$ be an Ambro model of $K_X+B$ and $f\colon X\to Z$ 
as defined in \ref{s-adjunction}. 
Since $Z'$ is an Ambro model, $M_{{Z}'}$ 
is nef and good, that is, it is the pullback of a nef and big $\Q$-divisor $M_T$ 
via some contraction $\pi\colon Z'\to T$. 
We can write $M_T\sim_\Q A_{T}+L_{T}$ where $A_{T}$ is  
an ample $\Q$-divisor and $L_{T}\ge 0$. Put $A_{Z'}=\pi^*A_T$ and $L_Z=\pi^*L_T$.
Thus, 
$M_{{Z}'}\sim_\Q A_{{Z}'}+L_{{Z}'}$ where $A_{Z'}$ is semi-ample and $L_{{Z}'}\ge 0$. 
Perhaps after replacing ${{Z}'}$ we may assume that 
$\Supp (B_{{Z}'}+L_{{Z}'})$ has simple normal crossing singularities. By our choice of 
$\delta'$, each coefficient of $B_{Z'}$ is $\le 1-\delta'$.

Since $A_{T}+L_{T}$ is nef and $A_{T}$ is ample, 
$$
A_{T}+\frac{1}{a+1}L_{T}=\frac{a}{a+1}A_{T}+\frac{1}{a+1}(A_{T}+L_{T})
$$ is ample 
for any $a>0$. So, perhaps after replacing $L_{{Z}'}$ with $ \frac{a}{a+1}L_{{Z}'}$ and $A_{{Z}'}$ 
with $A_{{Z}'}+\frac{1}{a+1}L_{Z'}$ 
for some sufficiently small $a>0$, we may assume that the coefficients of $L_{{Z}'}$ are sufficiently small. 
Thus, we can assume that 
the coefficients of $B_{{Z}'}+L_{{Z}'}$ are all $\le 1-\delta$. 
Perhaps after replacing $A_{{Z}'}$ we can also assume that 
$\Supp (B_{{Z}'}+A_{{Z}'}+L_{{Z}'})$ has simple normal crossing singularities and that the coefficients 
of $B_{{Z}'}+A_{{Z}'}+L_{{Z}'}$ are $\le 1-\delta$ and that $A_{{Z}'}\ge 0$. 

Let $A_Z,L_Z$ be the pushdown of $A_{{Z}'},L_{{Z}'}$ respectively.
Since 
$$
K_{{Z}'}+B_{{Z}'}+A_{{Z}'}+L_{{Z}'}=g^*(K_Z+B_Z+A_Z+L_Z)
$$
we deduce that $(Z,B_Z+A_Z+L_Z)$ is a $\delta$-lc pair. By putting $M_Z:=A_Z+L_Z$ 
we finish the proof of the theorem when $B$ has rational coefficients. 

Now we come to the general case, that is, when the coefficients of $B$ are not necessarily 
rational. We will do an approximation to reduce to the rational case. 
Pick $\epsilon''\in (0,\epsilon')$, let $\delta''>0$ be the number given by Proposition \ref{p-1} for the data 
$d,\epsilon'', \mathcal{P}$, and pick  some $\delta\in (0,\delta'')$.
 Then, using Shokurov's polytopes [\ref{log-models}],  
we can find real numbers 
$r_i\ge 0$ and rational boundaries $B^i$ such that 

$\bullet$ $\sum r_i=1$ and $K_X+B=\sum r_i(K_X+B^i)$,

$\bullet$ $K_X+B^i\sim_\Q 0/Z$,

$\bullet$ $\Supp B^i=\Supp B$, 

$\bullet$ $(X,B^i)$ are $\epsilon''$-lc.\\\\ 
Applying the arguments above in the rational case, for each $i$ we can choose 
$M_{Z}^i\ge 0$ such that 
$$ 
K_X+B^i\sim_\Q f^*(K_Z+B_Z^i+M_Z^i) 
$$ 
and that $(Z, B_Z^i+M_Z^i)$ is a $\delta$-lc pair. 

Fix a prime divisor $D$ on $Z$ and let its coefficient in 
$B_Z$ and $B_Z^i$ be $b$ and $b^i$ respectively. By definition, the pair $(X,B^i+(1-b^i)f^*D)$ 
is lc over the generic point of $D$. But then the pair $(X,B+\sum r_i(1-b^i)f^*D)$
is lc over the generic point of $D$. This means that 
$$
1-\sum r_ib^i=\sum r_i(1-b^i)\le 1-b
$$
which in turn implies that $\sum r_ib^i\ge b$. In other words, $\sum r_iB_Z^i\ge B_Z$. 

Now, the pair 
$$
(Z,\sum r_i B_Z^i+ \sum r_i M_Z^i)
$$ 
is $\delta$-lc and by putting 
$$
M_Z:=\sum r_i B_Z^i+ \sum r_i M_Z^i-B_Z
$$ 
we conclude that $(Z,B_Z+M_Z)$ is $\delta$-lc and 
$$
K_X+B\sim_\R f^*(K_Z+B_Z+M_Z)
$$
This completes the proof of the theorem.
\end{proof}

We need the following theorem for the proof of Corollary \ref{c-2}.

\begin{thm}\label{t-bnd-surfaces}
Let  $\epsilon, \lambda>0$ be real numbers.
Consider the set of pairs $(F,B_F)$ introduced just before Corollary \ref{c-2} with the extra assumption 
$d\le 3$. Then, $\mathcal{R}$, the set of the couples $(F,\Supp B_F)$ is a bounded family.
\end{thm}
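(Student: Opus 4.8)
The plan is to dispose of the dimension cases $d-1\le 1$ trivially and then concentrate on the genuinely substantive case, surfaces, i.e.\ $\dim F=2$. When $\dim F=0$ there is nothing to prove, and when $\dim F=1$ the conditions $F$ of Fano type and $K_F+B_F\sim_\R 0$ force $F\cong \PP^1$ with $\deg B_F=2$; since each non-zero coefficient of $B_F$ is $\ge \lambda$ and $(F,B_F)$ is $\epsilon$-lc, the support $\Supp B_F$ consists of at most $2/\lambda$ points on $\PP^1$, so the couples $(F,\Supp B_F)$ plainly form a bounded family. So from now on I would assume $F$ is a Fano type surface.

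First I would reduce to bounding $F$ itself together with a bound on the number and ``complexity'' of components of $\Supp B_F$. The key point is that $K_F+B_F\sim_\R 0$ with $(F,B_F)$ being $\epsilon$-lc and $F$ of Fano type gives two-sided control: $-K_F\sim_\R B_F\ge 0$ is big and effective, and the $\epsilon$-lc hypothesis bounds singularities of $F$ from the ``canonical'' side. I would invoke Alexeev's boundedness of $\epsilon$-lc log del Pezzo surfaces (or the more recent general boundedness of $\epsilon$-lc Fano varieties): the set of Fano type surfaces $F$ admitting an $\epsilon$-lc boundary $\Theta$ with $-(K_F+\Theta)$ nef forms a bounded family, and in particular there is a bound on $\rho(F)$, on the Cartier index of $-K_F$ on a fixed resolution, and on $(-K_F)^2$. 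Running such a bound requires knowing $(F,B_F)$ is $\epsilon$-lc, which is given.

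Next, with $F$ ranging in a bounded family, I would bound the number of components of $B_F$ and then their ``degree'' with respect to a fixed very ample polarisation. Since the coefficients of $B_F$ are $\ge\lambda$ on their support and $\sum (\text{coeff})\, B_F^i \cdot H = (-K_F)\cdot H$ is bounded (as $F$ is bounded and $-K_F$ has bounded ``size''), the number of components of $\Supp B_F$ is at most $\tfrac{1}{\lambda}(-K_F)\cdot H$, which is bounded; and each component $B_F^i$ satisfies $B_F^i\cdot H\le \tfrac{1}{\lambda}(-K_F)\cdot H$, so each component has bounded degree. A bounded number of divisors of bounded degree on varieties in a bounded family again forms a bounded family (Hilbert scheme / Chow variety argument), hence the couples $(F,\Supp B_F)$ form a bounded family. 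I would phrase the very last step exactly as in the proof of Lemma \ref{l-bnd-volume}: spread out over a base, use Noetherianity and finiteness of the relevant Hilbert polynomials.

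The main obstacle is the first genuine step: obtaining boundedness of the underlying Fano type surfaces $F$ from the $\epsilon$-lc hypothesis alone. If one wants a self-contained surface argument rather than quoting the general $\epsilon$-lc Fano boundedness, one can instead run the minimal model program on $F$ to reach a Mori fibre space $F'\to S$ and control the resulting del Pezzo surface or conic bundle together with $B_{F'}$, using $\epsilon$-lc to bound the discrepancies and hence the singular points; this is where all the real work lies, and it is also the reason the statement is restricted to $d\le 3$, since the analogous boundedness in higher dimension is precisely the expected statement that Theorem \ref{t-main} is designed to interact with.
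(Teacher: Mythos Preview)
Your proposal is correct. For $\dim F\le 1$ it matches the paper. For $\dim F=2$, both you and the paper invoke Alexeev to bound the underlying surface $F$, but you then diverge in how $\Supp B_F$ is controlled.

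The paper first runs an auxiliary LMMP: writing $\Delta_F$ for $B_F$ with every coefficient replaced by $\lambda$ and $C_F:=B_F-\Delta_F\ge 0$, one runs the LMMP on $K_F+B_F+tC_F\sim_\R tC_F$ to make $C_F$ nef; since this LMMP contracts only components of $B_F$, boundedness on the output pulls back to the original $F$. On the new model $-(K_F+\Delta_F)\sim_\R C_F$ is nef, hence semi-ample on a Fano type surface, and (using the bounded Cartier index of $K_F+\Delta_F$ coming from boundedness of $F$) one chooses $A_F\ge 0$ with $K_F+\Delta_F+A_F\sim_\Q 0$ and coefficients in a fixed finite set; boundedness of $(F,\Supp\Delta_F)=(F,\Supp B_F)$ then follows from [\ref{HMX}, Corollary~1.7].

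Your route is to intersect $B_F\equiv -K_F$ with a uniform very ample $H$ to bound both the number of components (via the lower bound $\lambda$ on coefficients) and the $H$-degree of each component, and then appeal to the Hilbert scheme. This is more elementary and bypasses both the LMMP manoeuvre and the dependence on HMX; the paper's approach, by contrast, stays within the HMX framework already central to the rest of the article and packages the pull-back step neatly via the observation that only components of $B_F$ are contracted.
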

\begin{proof}
First assume that $\dim F=1$ which means that $F\simeq \PP^1$. 
Replacing $\lambda$ with a smaller number we may assume that it is rational. 
Since $\deg B_F=2$ and each coefficient of $B_F$ is $\ge \lambda$, 
the number of components of $\Supp B_F$ is bounded only depending on $\lambda$. 
Then, the set of the couples $(F,\Supp B_F)$ belongs to a bounded family using Hilbert schemes of 
zero-dimensional subschemes of $F$. 

Now assume that $\dim F=2$.  Let $\Delta_F$ be the boundary obtained from $B_F$ by replacing each 
coefficient with $\lambda$. By definition, $C_F:=B_F-\Delta_F\ge 0$. Pick a small number $t>0$ 
so that $(F,B_F+tC_F)$ is still klt. Run the LMMP on $K_F+B_F+tC_F\sim_\R tC_F$. 
We get a log minimal model on which the pushdown of $C_F$ is nef. By replacing $F$ 
with that model we could assume that $C_F$ is nef. Note that the LMMP can contract only 
the components of $B_F$ so we can pullback boundedness to the original setting. 

By Alexeev [\ref{Alexeev}] the varieties $F$ belong to a bounded family. 
In particular, the Cartier index of $K_F+\Delta_F$ is bounded. So, we can 
pick a $\Q$-divisor $A_F\ge 0$ such that $K_F+\Delta_F+A_F\sim_\Q 0$ and 
such that the coefficients of  $\Delta_F+A_F$ belong to a fixed finite set 
depending only on $\epsilon$ and $\lambda$. Now, apply  [\ref{HMX}, Corollary 1.7] to get the boundedness 
of the couples $(F,\Supp \Delta_F)=(F,\Supp B_F)$.
\end{proof}

\begin{proof}(of Corollary \ref{c-2}) 
This follows from Theorems \ref{t-bnd-surfaces} and \ref{t-main}. 
\end{proof}

\begin{rem}\label{rem-bnd-R}
Actually, if one tries to prove the boundedness of $\mathcal{R}$ for $d>3$ inductively, then variants of 
Theorem \ref{t-main} 
appear naturally in the induction process. More precisely: pick $(F,B_F)$ as defined  just before Corollary \ref{c-2}.
 After running a suitable LMMP 
one can assume that $F$ has a Mori fibre space structure $F\to G$. Assume that $\dim G>0$. The couples associated to the 
log general fibres of this morphism 
are bounded by induction. One uses a variant of Theorem \ref{t-main} to show that $G$ is also bounded 
(one needs a stronger form of \ref{t-main} in which the coefficients of 
$B_G+M_G$ are not too small and this needs an effective version of Ambro's result on $M_G$); 
the next step is to use these boundedness results to prove that 
$F$ itself together with $\Supp B_F$ are bounded. This is more related to the work of Shokurov mentioned 
in the introduction. On the other hand, if $\dim G=0$, one needs different arguments.
\end{rem}

\begin{proof}(of Corollary \ref{c-main}) 
This is immediate by Theorem \ref{t-main} and [\ref{HMX}, Corollary 1.7].
\end{proof}

\begin{proof}(of Corollary \ref{c-dim2}) 
Let $(X,B)$ and $f\colon X\to Z$ be as in Conjecture $S_{d,\epsilon,\mathcal{P}}$ such that 
$d\le \dim Z+1$. We may assume that $d=\dim Z+1$ otherwise 
$f$ is birational and we can argue as in the proof of Theorem \ref{t-main}. Using the arguments of the proof 
of Lemma \ref{l-dim-1}, Proposition \ref{p-1}, and Theorem \ref{t-main}, 
we can reduce the problem to the case $\dim Z=1$ and $\dim X=2$. Moreover, we only need to 
show that the coefficients of $B_Z$ are $\le 1-\delta$ for a fixed $\delta>0$ depending only on 
$\epsilon$.

By replacing $X$ with its minimal resolution, we may assume that $X$ is smooth. Next by running an 
LMMP$/Z$ on $K_X$ we can assume that $X\to Z$ is an extremal contraction, that is, in this case 
a $\PP^1$-bundle. Fix $\epsilon'\in (0,\epsilon)$.
Pick a closed point $D\in Z$ and let $t$ be the $\epsilon'$-lc threshold of $f^*D$ with respect to the 
pair $(X,B)$, that is, $t$ is the largest number so that $(X,B+tf^*D)$ is $\epsilon'$-lc. 
Here $T:=f^*D$ is a reduced curve. 
If the coefficient of $T$ in ${B+tf^*D}$ is $1-\epsilon'$,  then $t\ge (\epsilon-\epsilon')$ because the 
coefficient of $T$ in $B$ is at most $1-\epsilon$. In this case, the coefficient of $D$ 
in $B_Z$ is at most $1-\epsilon+\epsilon'$ and $\delta=\epsilon-\epsilon'$ works. From now on we
assume that  the coefficient of $T$ in ${B+tf^*D}$ is $<1-\epsilon'$. 

There is a prime exceptional$/X$ divisor $E$ such that $a(E,X,B+tf^*D)=\epsilon'$. Let $Y\to X$ be the 
extremal contraction which extracts $E$. There is another extremal ray on $Y/Z$ which we can 
contract to get $X'$, and $X'\to Z$ is an extremal contraction. However, $X'$ may not be smooth. 
Let $g,h,f'$ denote the contractions $Y\to X$, $Y\to X'$, and $X'\to Z$ respectively. 
Write $K_{X'}+B'=h_*g^*(K_X+B)$ and let $T'=\Supp f'^*D$. Then, the coefficient of $T'$ in 
$B'+tf'^*D$ is $1-\epsilon'$. Since the coefficient of $T'$ in $B'$ is at most $1-\epsilon$, 
it is enough to show that $f'^*D=m'T'$ where $m'$ is bounded depending on $\epsilon$. 

Applying the boundedness of $\epsilon'$-lc complements 
in dimension two [\ref{B-complements}], we get a real number $\epsilon''>0$ and a finite set 
$\Lambda$ of rational numbers depending only on $\epsilon'$ which satisfy: 
there is a boundary $\Delta'$ such that $(X',\Delta')$ is $\epsilon''$-lc, $K_{X'}+\Delta'\sim_\Q 0/Z$, 
and the coefficients of $\Delta'$ belong to $\Lambda$. 
Now we can apply Corollary \ref{c-2} to get the bound on $m'$. 
\end{proof}

\begin{rem}\label{rem-not-bnd}
An obvious question to ask is: what can we do about Conjecture $S_{d,\epsilon,\mathcal{P}}$ 
if $\mathcal{P}$ is not bounded?  
Let $(X,B)$ and $f\colon X\to Z$ be as in Conjecture $S_{d,\epsilon,\mathcal{P}}$, and let $(F,B_F)$ be
a log general fibre. 
In some cases, the couples $(F,\Supp B_F)$ belong to a bounded family $\mathcal{P}'$ even if $\mathcal{P}$ 
is not bounded. For example, all the couples in $\mathcal{P}$ of dimension $\le d-1$ may belong to 
a bounded family $\mathcal{P}'$, that is, the unbounded part of $\mathcal{P}$ 
may not be relevant to the conjecture. In this case, Conjecture $S_{d,\epsilon,\mathcal{P}'}$ implies 
Conjecture $S_{d,\epsilon,\mathcal{P}}$, and we can use Theorem \ref{t-main} to prove Conjecture 
$S_{d,\epsilon,\mathcal{P}'}$. 

In general, we cannot shrink $\mathcal{P}$ to a bounded family $\mathcal{P}'$. The idea then is to 
modify the pair $(X,B)$ to get boundedness. For example, as in the proof of Corollary \ref{c-dim2}, 
one can hope to find a real number $\epsilon'>0$ and a fixed finite set $\Lambda$ of real numbers depending 
only on $d,\epsilon'$ such that: there is a boundary $\Delta$ so that $(X,\Delta)$ is $\epsilon'$-lc, $K_{X}+\Delta\sim_\Q 0/Z$, 
and the coefficients of $\Delta$ belong to $\Lambda$. Next one applies Corollary \ref{c-main}. This is closely related 
to the theory of complements [\ref{Sh-complements}][\ref{B-complements}].
\end{rem}


\vspace{2cm}

\flushleft{DPMMS}, Centre for Mathematical Sciences,\\
Cambridge University,\\
Wilberforce Road,\\
Cambridge, CB3 0WB,\\
UK\\
email: c.birkar@dpmms.cam.ac.uk

\end{document}